\newcommand{\stkout}[1]{\ifmmode\text{\sout{\ensuremath{#1}}}\else\sout{#1}\fi}
\tikzstyle{vertex}=[circle,draw=black,fill=black,inner sep=0,minimum size=5pt,text=white,font=\footnotesize]
\renewenvironment{proof}[1][\proofname] {\par\pushQED{\qed}\normalfont\topsep6\p@\@plus6\p@\relax\trivlist\item[\hskip\labelsep\textit{#1}\@addpunct{.}]\ignorespaces}{\popQED\endtrivlist\@endpefalse}
\newtheorem{theorem}{\bf Theorem}[section]
\newtheorem{lemma}[theorem]{\bf Lemma}
\newtheorem{claim}[theorem]{\bf Claim}
\theoremstyle{definition}
\newcommand\claimproofend{\renewcommand{\qedsymbol}{$\boxdot$}
\end{proof}
\renewcommand{\qedsymbol}{$\square$}}
\def\eps{\varepsilon}
\def\Pb{\mathbb{P}}
\def\cB{\mathcal{B}}
\title{\vspace{-0.9cm}Cyclic subsets of tournaments}
\author{
Zach Hunter\thanks{Department of Mathematics, ETH Z\"urich, Switzerland. Email: {\tt \{zach.hunter, aleksa.milojevic, benjamin.sudakov\}@math.ethz.ch}. Research supported in part by SNSF grant 200021-228014.}
\and Teng Liu
\and Aleksa Milojevi\'c\footnotemark[1] \and Benny Sudakov\footnotemark[1]
}
\date{}
\begin{document}

\maketitle

\begin{abstract}
Let $G$ be a Dirac graph, and let $S$ be a vertex subset of $G$, chosen uniformly at random. How likely is the induced subgraph $G[S]$ to be Hamiltonian? This question, proposed by Erd\H{o}s and Faudree in 1996, was recently resolved by Dragani\'c, Keevash and M\"uyesser, in the setting of graphs. In this paper, we study a similar question for tournaments -- if $T$ is a tournament of high minimum degree, how likely is it for a random induced subtournament of $T$ to be Hamiltonian? We prove an optimal bound on this probability, and extend the results to the regime where the subset is not sampled uniformly at random, but according to a $p$-biased measure.
\end{abstract}

\section{Introduction}

A Hamilton cycle is a cycle visiting every vertex of a graph, and a graph containing a Hamilton cycle is called Hamiltonian. The study of Hamilton cycles is a central theme in graph theory, optimization, and theoretical computer science. For instance, deciding whether a graph is Hamiltonian is one of the best-known instances of an NP-complete problem, suggesting that it is impossible to obtain an easy to check condition fully characterizing Hamiltonian graphs. Therefore, for many decades the focus has been on identifying simple conditions which imply Hamiltonicity. The best example of such a condition is the classical Dirac's theorem \cite{D}, which states that every $n$-vertex graph of minimum degree $\delta(G)\geq n/2$ contains a Hamilton cycle. 

The theme of Hamiltonicity is even richer in the setting of directed graphs, where the edges are equipped with directions and one asks for a \textit{directed} Hamilton cycle. For example, observe that even the usual complete graph on $n$ vertices becomes much more interesting when directions are assigned to its edges, transforming it into a tournament (which is not necessarily always Hamiltonian). In the setting of directed graphs, the analogue of Dirac's theorem has been proven by Ghouila-Houri \cite{GH}, who showed that every $n$-vertex directed graph $D$ in which each vertex has at least $n/2$ incoming and outgoing edges contains a directed Hamilton cycle. In tournaments, however, the Hamilton cycles are somewhat easier to find, since already $\lfloor (n+2)/4\rfloor$ incoming and outgoing edges at each vertex suffice to guarantee a directed Hamilton cycle.

A very fruitful direction of research in the past two decades has been the study of how \textit{robustly} the above conditions imply Hamiltonicity (for a broader overview of the topic of robustness of graph properties, see \cite{S}). For example, given an $n$-vertex graph of minimum degree $n/2$, Dirac's theorem guarantees at least one Hamilton cycle, but can one guarantee many such cycles? This question was studied by S\'ark\"ozy, Selkow and Szemer\'edi \cite{SSS}, and ultimately resolved by Cuckler and Kahn in \cite{CK}, who proved that Dirac graphs contain at least $(1/2-o(1))^n n!$ Hamilton cycles. The counts of Hamilton cycles in random graphs have also been studied, for example in \cite{FKL, GK}.

Alternatively, resilience-type questions ask whether a Dirac graph, or a random graph, remains Hamiltonian even if a certain number of edges is removed from the graph adversarially (see e.g. \cite{FK, LS, M1, NST, SV}, and also \cite{HSS, FNNPS, M2} for the discussion of directed graphs). Furthermore, Krivelevich, Lee and Sudakov \cite{KLS} studied how likely Dirac graphs are to remain Hamiltonian, if their edges are sampled at random, and they showed that for an $n$-vertex graph $G$ with $\delta(G)\geq n/2$ and a probability $p\geq C\log n/n$, if every edge in $G$ is kept with probability $p$, then the resulting graph is Hamiltonian with high probability. Note that this theorem is optimal, up to the value of the constant $C$, since when $p<\log n/n$, the resulting graph will contain vertices of degree at most 1 with high probability.

It is equally natural to sample vertices instead of edges, and ask what is the probability that a random vertex-subset of a graph $G$ is Hamiltonian, if each vertex is kept with probability $p$. A version of this question was asked already in 1996 by Erd\H{o}s and Faudree \cite{E}. Namely, they asked to show that any $(n+1)$-regular graph $G$ on $2n$ vertices contains at least $\Omega(2^{2n})$ cyclic vertex-subsets $S\subseteq V(G)$, where $S$ is said to be \textit{cyclic} if there exists a cycle in $G$ with the vertex-set $S$. Observe that an equivalent way to phrase the conjecture is to take a random vertex-subset of $G$, including each vertex with probability $1/2$, and ask to show that the subgraph induced on this set is Hamiltonian with probability $\Omega(1)$.

This conjecture is tight in several ways -- for example, if $G$ is assumed to be a $n$-regular graph instead of an $(n+1)$-regular graph, then the conjecture does not hold anymore, which can be seen by considering the complete bipartite graph $G=K_{n, n}$. In this graph, the only vertex-subsets which are cyclic are those which intersect both sides of the bipartition in an equal number of vertices, and there are only $O(2^{2n}/\sqrt{n})$ such vertex-subsets. For a similar reason, it is not sufficient to ask for $G$ to have minimum degree $n+1$. Indeed, if we consider $G$ to be a complete bipartite graph $K_{n, n}$ with an $n$-vertex star added to both parts, then every cyclic vertex subset $G$ intersects both sides of the bipartition in a number of vertices differing by at most $2$. Again, there are only $O(2^{2n}/\sqrt{n})$ such vertex-subsets, showing that the conjecture does not hold without the regularity assumption.

The conjecture of Erd\H{o}s and Faudree was recently resolved by Dragani\'c, Keevash and M\"uyesser \cite{DKM}, who showed that a uniformly random vertex subset of an $(n+1)$-regular $2n$-vertex graph induces a Hamiltonian graph with probability at least $1/2-o(1)$, which is tight (as can be seen by considering a complete bipartite graph $K_{n-1, n+1}$ with a two-factor added to the larger side). Here, and throughout, $o(1)$ denotes a quantity which goes to $0$ as $n$ goes to infinity, with all other parameters held fixed.

In this paper, we study a natural variant of the Erd\H{o}s-Faudree question for tournaments. As we will see, tournaments differ from graphs in several ways -- for instance, as we previously mentioned, if a tournament $T$ has minimum semidegree\footnote{The \textit{minimum semidegree} of a tournament $T$ is the largest number $\delta$ such that $d^+(v), d^-(v)\geq \delta$ for all vertices $v\in V(T)$, i.e. such that each vertex of $T$ has at least $\delta$ incoming and outgoing edges.} $\delta^0(T)\geq \lfloor (n+2)/4\rfloor$, then $T$ must contain a directed Hamilton cycle. We show that this assumption also suffices to conclude that with probability $1/2-o(1)$ a uniformly random vertex-subset of $T$ is Hamiltonian. Note that no regularity assumption is required, in contrast to the case of graphs.

\begin{theorem}\label{thm:special_1}
Let $T$ be an $n$-vertex tournament with $\delta^0(T)\geq \lfloor \frac{n+2}{4}\rfloor$. Then, for a uniformly random subset $S\subseteq V(T)$, we have $\Pb[T[S]\text{ is Hamiltonian}]\geq 1/2-o(1)$.
\end{theorem}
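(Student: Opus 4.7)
The natural approach is to reduce to verifying the semidegree condition: by the classical tournament semidegree theorem mentioned in the introduction, $T[S]$ is Hamiltonian whenever $\delta^0(T[S]) \geq \lfloor(|S|+2)/4\rfloor$, so it suffices to show this condition holds with probability at least $1/2 - o(1)$. I would first dispatch the ``stable'' vertices $v$ with $\min(d^-(v), d^+(v)) \geq \lfloor(n+2)/4\rfloor + \omega(\sqrt{n\log n})$ via Chernoff bounds plus a union bound, since for each such vertex the semidegrees in $T[S]$ concentrate comfortably above $\lfloor(|S|+2)/4\rfloor$ and a failure occurs with probability $o(1/n)$. The remaining ``critical'' vertices are those with one of $d^\pm(v)$ within $O(\sqrt{n\log n})$ of the semidegree threshold, and for each isolated critical vertex a central-limit-theorem computation yields $\Pb[v\in S \text{ and } v \text{ is bad in } T[S]] \to 1/4$ as $n\to\infty$, which is already consistent with the theorem when the critical set is bounded in size.

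The essential difficulty is when the number of critical vertices is large, since then their bad events are strongly correlated through $T$'s structure and a naive union bound fails catastrophically. The prototypical extremal example here is the tournament built from two halves $V_1, V_2$ of size $(n-1)/2$, oriented so that all edges go $V_1 \to V_2$, glued through a single ``hub'' vertex $v_0$ with $v_0 \to V_1$ and $V_2 \to v_0$, together with near-regular inner tournaments on $V_1$ and $V_2$. Every non-hub vertex is critical, yet the joint bad event collapses to $\{v_0 \notin S\}$, which has probability exactly $1/2$; this strongly suggests that in general the critical bad events are all controlled by a single ``bridge'' vertex, or a small set of them.

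Concretely, the step I would try to formalize is a structural lemma: for any tournament $T$ meeting the semidegree bound, there is an event $E = E(T,S)$ with $\Pb[E] \geq 1/2 - o(1)$ such that, conditional on $E$, every critical vertex has semidegree comfortably above $\lfloor(|S|+2)/4\rfloor$ in $T[S]$ with probability $1 - o(1)$. In the simplest case $E$ would be of the form $\{V_0 \subseteq S\}$ for a bridge set $V_0 \subseteq V(T)$ of size at most $1 + o(1)$, but more exotic conditioning events may be needed to cover all near-extremal tournaments. Given such a lemma, the theorem follows from combining $\Pb[E] \geq 1/2 - o(1)$ with the Chernoff/union-bound argument for the stable vertices. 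The main obstacle I anticipate lies in proving this structural lemma: one must rule out configurations where the critical vertices split into several independent clusters requiring disjoint hubs (which would force a conditioning event of probability strictly below $1/2$ and thus contradict the theorem). The key leverage ought to be the identity $d^-(v) + d^+(v) = n-1$ combined with the semidegree lower bound, which tightly links $d^-$-critical and $d^+$-critical vertices and should force all critical structure to concentrate near a single bottleneck in the condensation of $T$.
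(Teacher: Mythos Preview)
Your reduction at the very first step is too lossy: the event $\{\delta^0(T[S]) \geq \lfloor(|S|+2)/4\rfloor\}$ is strictly stronger than $\{T[S]\text{ is Hamiltonian}\}$, and in your own hub example it has probability only $o(1)$, not $1/2-o(1)$. Write $m_i = |V_i\cap S|$ and suppose $v_0\in S$. For $a\in V_1\cap S$ the in-neighbourhood of $a$ in $T[S]$ lies in $(V_1\cap S)\cup\{v_0\}$, and since the average in-degree inside the tournament $T[V_1\cap S]$ equals $(m_1-1)/2$, the minimum in-degree of any $a$ in $T[S]$ is at most $\lfloor(m_1-1)/2\rfloor + 1$. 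Hence the semidegree condition forces $\lfloor(m_1+1)/2\rfloor \geq \lfloor(m_1+m_2+3)/4\rfloor$, and the symmetric out-degree argument in $V_2$ forces the same with $m_1,m_2$ swapped. Together these pin $|m_1-m_2|$ to $O(1)$, an event of probability $O(n^{-1/2})$. So no conditioning event $E$ of probability $1/2-o(1)$ can make the semidegree bound hold. When you write that in the hub example ``the joint bad event collapses to $\{v_0\notin S\}$'', that is correct for the event that $T[S]$ fails to be Hamiltonian, but false for the event that the semidegree bound fails in $T[S]$ --- your proposal silently switches between the two.

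The paper avoids this trap by proving Hamiltonicity of $T[S]$ via strong connectivity, never through the global semidegree threshold in $T[S]$. It splits into two regimes. If $T$ has no balanced bipartition with almost all edges crossing one way, a short stability lemma (a non-Hamiltonian tournament with $\delta^0 \geq (\tfrac14-o(1))n$ must have nearly $n/2$ vertices of in-degree close to $n/4$) combined with Chernoff bounds shows $T[S]$ is Hamiltonian with probability $1-o(1)$. Otherwise $T$ has an almost-directed cut $A\cup B$ (your hub example lives here); a cleaning step makes $T[A],T[B]$ have semidegree about $n/6$, so $T[A\cap S]$ and $T[B\cap S]$ are each strongly connected with probability $1-o(1)$, and an $A\to B$ edge survives with probability $1-o(1)$. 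The only remaining obstacle is a path back from $B\cap S$ to $A\cap S$, and a counting argument shows $T$ must contain at least one \emph{connector} vertex $v$ with many in-neighbours in $B$ and many out-neighbours in $A$; the event $\{v\in S\}$ then supplies that path. Your instinct that a single bridge vertex controls everything is exactly right --- but the bridge certifies strong connectivity of $T[S]$, not the semidegree threshold.
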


Theorem~\ref{thm:special_1} is tight, both in terms of the assumption on $\delta^0(T)$ and in terms of the bound on the probability of $T[S]$ being Hamiltonian. Indeed, if, say $n=4k+2$, and $\delta^0(T)=k< \lfloor \frac{n+2}{4}\rfloor$, consider the tournament $T$ on the vertex set $V(T)=A\cup B$, where $|A|=|B|=2k+1$, $T[A], T[B]$ are $k$-regular tournaments, and $a\to b$ for all $a\in A, b\in B$. Recall that a tournament is Hamiltonian if and only if it is strongly connected, i.e. any vertex can be reached from any other using a directed path. Since no edges point from $B$ to $A$, $T[S]$ is Hamiltonian only if $S\cap A=\varnothing$ or $S\cap B=\varnothing$, which is \textit{very} unlikely. Thus, we must assume that $\delta^0(T)$ is at least as large as the Hamiltonicity threshold in order to hope that $T[S]$ is Hamiltonian with constant probability.

On the other hand, the constant $1/2$ in the conclusion cannot be improved either (unless $n=4k+1$, as we will see later). To see this, consider a tournament $T$ on $n=4k+3$ vertices with the vertex set $V(T)=A\cup B\cup \{v\}$, where $|A|=|B|=2k+1$. Let $T[A], T[B]$ be regular tournaments, with all vertices having in-degree and out-degree $k$. If we direct all edges from $A$ to $B$, from $B$ to $v$ and from $v$ to $A$, it is not hard to see that the resulting tournament has minimum semidegree $k+1$, as needed. Moreover, if $S$ is a uniform random subset of $V(T)$ which intersects both $A$ and $B$, $T[S]$ is Hamiltonian if and only if $v\in S$. Hence, the probability $T[S]$ is Hamiltonian is at most $\frac{1}{2}+o(1)$. 

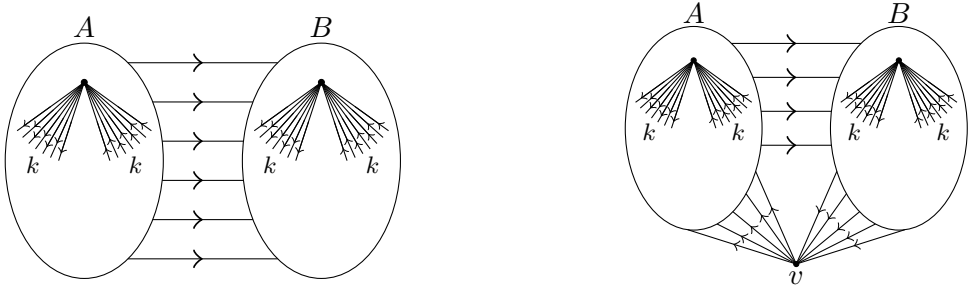
\begin{figure}[ht]
\begin{minipage}{0.48\textwidth}
\centering
\begin{tikzpicture}[scale=.52]

\coordinate (A) at (-3,0);
\coordinate (B) at (3,0);
\coordinate (U) at (-3,2);
\coordinate (V) at (3,2);

\draw (A) circle [y radius=3 cm,x radius=2cm];
\draw (B) circle [y radius=3 cm,x radius=2cm];

\draw ($(A)+(0,3.4)$) node {$A$};
\draw ($(B)+(0,3.4)$) node {$B$};

\foreach \ang/\delay in {-72/1.9,-66/1.7,-60/1.9,-54/1.7,-48/1.9,-42/1.7,-36/1.9}
{
\draw [black,-] ($(U)+(\ang:1.05)$) -- ($(U)+(\ang:2.10)$);
\draw [black,-<] (U) -- ($(U)+(\ang:\delay)$);
}

\foreach \ang/\delay in {-108/1.9,-114/1.7,-120/1.9,-126/1.7,-132/1.9,-138/1.7,-144/1.9}
{
\draw [black,-] ($(U)+(\ang:1.05)$) -- ($(U)+(\ang:2.10)$);
\draw [black,->] (U) -- ($(U)+(\ang:\delay)$);
}
\draw [fill] (U) [radius=0.075cm] circle ;
\draw ($(U)+(1.3,-2.1)$) node {\footnotesize$k$};
\draw ($(U)+(-1.3,-2.1)$) node {\footnotesize$k$};

\foreach \ang/\delay in {-72/1.9,-66/1.7,-60/1.9,-54/1.7,-48/1.9,-42/1.7,-36/1.9}
{
\draw [black,-] ($(V)+(\ang:1.05)$) -- ($(V)+(\ang:2.10)$);
\draw [black,-<] (V) -- ($(V)+(\ang:\delay)$);
}

\foreach \ang/\delay in {-108/1.9,-114/1.7,-120/1.9,-126/1.7,-132/1.9,-138/1.7,-144/1.9}
{
\draw [black,-] ($(V)+(\ang:1.05)$) -- ($(V)+(\ang:2.10)$);
\draw [black,->] (V) -- ($(V)+(\ang:\delay)$);
}
\draw [fill] (V) [radius=0.075cm] circle ;
\draw ($(V)+(1.3,-2.1)$) node {\footnotesize$k$};
\draw ($(V)+(-1.3,-2.1)$) node {\footnotesize$k$};

\foreach \h in {-2.5, -1.5, -0.5, 0.5, 1.5, 2.5} {
    \pgfmathsetmacro{\x}{2*sqrt(1 - \h*\h/9)-3}
    \pgfmathsetmacro{\xx}{-2*sqrt(1 - \h*\h/9)+3}
    \draw [black,
           postaction={decorate},
           decoration={markings, mark=at position 0.5 with {\arrow[scale=2]{>}}}]
          (\x, \h) -- (\xx, \h);
}

\end{tikzpicture}
\end{minipage}
\begin{minipage}{0.48\textwidth}
\centering
\begin{tikzpicture}[scale=.45]
\coordinate (A) at (-3,0);
\coordinate (B) at (3,0);
\coordinate (U) at (-3,2);
\coordinate (V) at (3,2);
\coordinate (X) at (0, -4);

\draw (A) circle [y radius=3 cm,x radius=2cm];
\draw (B) circle [y radius=3 cm,x radius=2cm];

\draw ($(A)+(0,3.4)$) node {$A$};
\draw ($(B)+(0,3.4)$) node {$B$};
\draw [fill] (X) [radius=0.075cm] circle ;
\draw ($(X)+(0,-0.4)$) node {$v$};

\foreach \ang/\delay in {-72/1.9,-66/1.7,-60/1.9,-54/1.7,-48/1.9,-42/1.7,-36/1.9}
{
\draw [black,-] ($(U)+(\ang:1.05)$) -- ($(U)+(\ang:2.10)$);
\draw [black,-<] (U) -- ($(U)+(\ang:\delay)$);
}

\foreach \ang/\delay in {-108/1.9,-114/1.7,-120/1.9,-126/1.7,-132/1.9,-138/1.7,-144/1.9}
{
\draw [black,-] ($(U)+(\ang:1.05)$) -- ($(U)+(\ang:2.10)$);
\draw [black,->] (U) -- ($(U)+(\ang:\delay)$);
}
\draw [fill] (U) [radius=0.075cm] circle ;
\draw ($(U)+(1.3,-2.1)$) node {\footnotesize$k$};
\draw ($(U)+(-1.3,-2.1)$) node {\footnotesize$k$};

\foreach \ang/\delay in {-72/1.9,-66/1.7,-60/1.9,-54/1.7,-48/1.9,-42/1.7,-36/1.9}
{
\draw [black,-] ($(V)+(\ang:1.05)$) -- ($(V)+(\ang:2.10)$);
\draw [black,-<] (V) -- ($(V)+(\ang:\delay)$);
}

\foreach \ang/\delay in {-108/1.9,-114/1.7,-120/1.9,-126/1.7,-132/1.9,-138/1.7,-144/1.9}
{
\draw [black,-] ($(V)+(\ang:1.05)$) -- ($(V)+(\ang:2.10)$);
\draw [black,->] (V) -- ($(V)+(\ang:\delay)$);
}
\draw [fill] (V) [radius=0.075cm] circle ;
\draw ($(V)+(1.3,-2.1)$) node {\footnotesize$k$};
\draw ($(V)+(-1.3,-2.1)$) node {\footnotesize$k$};

\foreach \h in {-0.5, 0.5, 1.5, 2.5} {
    \pgfmathsetmacro{\x}{2*sqrt(1 - \h*\h/9)-3}
    \pgfmathsetmacro{\xx}{-2*sqrt(1 - \h*\h/9)+3}
    \draw [black,
           postaction={decorate},
           decoration={markings, mark=at position 0.5 with {\arrow[scale=2]{>}}}]
          (\x, \h) -- (\xx, \h);
}

\foreach \ang/\delay in {-85/0.58, -110/0.7, -125/0.72, -140/0.7, -155/0.61} {
    \pgfmathsetmacro{\x}{2*cos(\ang)+3}
    \pgfmathsetmacro{\y}{3*sin(\ang)}
    \draw [black,
           postaction={decorate},
           decoration={markings, mark=at position \delay with {\arrow[scale=1.2]{<}}}]
          (X) -- (\x, \y);
}

\foreach \ang/\delay in {-85/0.58, -110/0.7, -125/0.72, -140/0.7, -155/0.61} {
    \pgfmathsetmacro{\x}{-2*cos(\ang)-3}
    \pgfmathsetmacro{\y}{3*sin(\ang)}
    \draw [black,
           postaction={decorate},
           decoration={markings, mark=at position \delay with {\arrow[scale=1.2]{>}}}]
          (X) -- (\x, \y);
}

\end{tikzpicture}\end{minipage}
    \caption{Examples showing tightness of Theorem~\ref{thm:special_1}.}
    \label{fig:enter-label}
\end{figure}
\medskip 

One of the advantages of working with tournaments is that the above result can be extended in several directions, say by considering different probability distributions over vertex-subsets or modifying the minimum degree conditions. For instance, Dragani\'c, Keevash and M\"uyesser ask whether an analogue of their results can be established if $S$ is a vertex-subset in which each vertex is included independently with probability $p\in (0, 1)$. Also, we can ask what happens if a stronger minimum semidegree condition is imposed onto $T$. Before we state the answer to these questions, let us introduce a piece of notation -- for a tournament $T$ and a probability $p\in (0, 1)$, we denote by $T_p$ the subtournament of $T$ induced on a random vertex subset $S\subseteq V(T)$ chosen by including each vertex randomly and independently with probability $p$. With this notation, we can state our main theorem, which answers the above questions and implies Theorem~\ref{thm:special_1} as a corollary by setting $t=1$. 

\begin{theorem}\label{thm:main}
Let $p\in (0, 1)$ and let $t\geq 1$ be an integer. For sufficiently large $n$ and an $n$-vertex tournament $T$ with $\delta^0(T)\geq \lfloor \frac{n-1-t}{4}\rfloor+t$, we have
\[\Pb[T_p\text{ is Hamiltonian}]\geq 1-(1-p)^{t}-o(1).\] 
If $n-t\equiv 1\bmod 4$, then the bound can be improved to $1-(1-p)^{t+1}-o(1)$.
\end{theorem}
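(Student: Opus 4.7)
The plan is a stability-type argument. Fix $\varepsilon = \varepsilon(p, t) > 0$ small. The analysis splits into a \emph{non-extremal} case, where $T_p$ is Hamiltonian with probability $1 - o(1)$, and an \emph{extremal} case, where the failure probability is controlled by a small set of ``bottleneck'' vertices reproducing the construction in Figure~\ref{fig:enter-label}. The first step is a stability dichotomy: either \emph{(i)} $V(T)$ admits a partition $A \cup B \cup X$ with $|X| = O_t(1)$, $\bigl||A|-|B|\bigr| = O_t(1)$, at most $\varepsilon n^2$ edges between $A$ and $B$ pointing against the orientation $A \to B$, and each vertex of $X$ sending (respectively receiving) linearly many edges to $A$ (respectively from $B$); or \emph{(ii)} every balanced cut of $T$ is crossed by $\Omega(n^2)$ edges in each direction, giving $T$ a robust-expansion property. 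This is the usual stability shape near the Moon semidegree threshold, and can be obtained by starting from a nearly-acyclic max-cut and iteratively exploiting the semidegree hypothesis.

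If alternative \emph{(ii)} holds, I would show $\Pb[T_p\text{ is Hamiltonian}] = 1-o(1)$, which is stronger than claimed. Chernoff-type concentration transfers the expansion of $T$ to $T_p$: only a sublinear set of vertices of $T_p$ can drop below the Moon threshold for $|V(T_p)|$, while every balanced cut of $T_p$ is crossed by $\Omega(|V(T_p)|^2)$ edges in each direction. A rotation/absorbing argument for tournaments in this regime then produces a Hamilton cycle in $T_p$ whp.

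If alternative \emph{(i)} holds, two sub-steps are needed. First, a lower bound on $|X|$: each $a \in A$ has in-degree at most $(|A|-1)/2 + |X| + O(\varepsilon n)$ in $T$, so $\delta^0(T) \geq \lfloor(n-1-t)/4\rfloor + t$ combined with $|A|+|B| = n - |X|$ and $\bigl||A|-|B|\bigr| = O_t(1)$ forces $|X| \geq t$. When $n - t \equiv 1 \pmod 4$, parity forces one of $|A|, |B|$ to be even; a tournament on an even vertex set must contain a vertex of internal in-degree at most $|A|/2 - 1$, whose missing in-neighbour must lie in $X$, upgrading the bound to $|X| \geq t+1$. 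Second, conditional on $S \cap X \neq \varnothing$, I would build a Hamilton cycle in $T[S]$ whp: by Chernoff $|S \cap A| \approx p|A|$ and $|S \cap B| \approx p|B|$, and a chosen $v \in S \cap X$ retains linearly many out-neighbours in $S \cap A$ and in-neighbours in $S \cap B$; R\'edei's theorem together with the non-extremal analysis applied internally to $T[A], T[B]$ (which are themselves non-extremal tournaments of semidegree $\approx |A|/4, |B|/4$) yields Hamilton paths in $T[S\cap A]$ and $T[S \cap B]$ with endpoints of one's choosing in the respective linear neighbourhoods of $v$; these paths are then glued through $v$ and through a single $A \to B$ crossing edge into a Hamilton cycle. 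Hence $\Pb[T_p\text{ not Hamiltonian}] \leq \Pb[S \cap X = \varnothing] + o(1) = (1-p)^{|X|} + o(1)$, giving the claim.

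The chief difficulty is calibrating the stability dichotomy so that alternative \emph{(ii)} is quantitatively strong enough to yield robust Hamiltonicity of $T_p$ despite $\delta^0(T) - n/4$ being only $\Theta(t)$ rather than $\Omega(n)$: the argument must leverage bipartition expansion and not just per-vertex semidegree slack, and transfer that expansion through random sampling near the Hamiltonicity threshold. The parity upgrade from $|X| \geq t$ to $|X| \geq t+1$ in the mod-$4$ case is also delicate, relying on the exact form of $\lfloor(n-1-t)/4\rfloor$ and on the non-existence of perfectly regular tournaments on even vertex sets.
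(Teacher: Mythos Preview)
Your overall architecture---a stability split into a non-extremal case (where $T_p$ is Hamiltonian whp) and an extremal case governed by a small set of connector vertices---matches the paper's, but the extremal counting has a fatal gap. You write that each $a\in A$ has in-degree at most $(|A|-1)/2+|X|+O(\eps n)$, the $O(\eps n)$ coming from the back-edges $B\to A$ permitted under your hypothesis $e(B,A)\le\eps n^2$. Combined with $\delta^0(T)\ge\lfloor(n-1-t)/4\rfloor+t$ and $|A|\approx n/2$, this yields a lower bound on $|X|$ of the shape $O_t(1)-\Theta(\eps n)$, which is vacuous for any fixed $\eps>0$ once $n$ is large: the constant $t$ you are trying to extract is swamped by a linear error term, and the conclusion $|X|\ge t$ does not follow.

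The paper closes this by a further case split you omit. After reaching the $\eps$-good partition, it asks whether there is a matching of $k$ back-edges from $B$ to $A$, for a constant $k=k(p,t,\sigma)$. If yes, Lemma~\ref{lemma:using connectors}(b) gives $\Pb[T_p\text{ Hamiltonian}]\ge 1-(1-p^2)^k-\sigma$ directly. If no, K\"onig's theorem provides a vertex cover of size $<k$; having first arranged that no vertex of $A$ or $B$ has more than $k+t$ back-neighbours (such a vertex is itself a $k$-connector and gets moved to $X$), one obtains $e(B,A)\le k(k+t)=O_t(1)$, which replaces your $O(\eps n)$ by $o(1)$. Only then does the double-count---summing $d^-$ over $A$ and $d^+$ over $B$, bounding $e(X,A)$ by splitting $X$ into connectors and non-connectors via the minimality of $|X_0|$---close to yield at least $t$ (or $t+1$) $k$-connectors. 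Two smaller points: the paper does not try to force $|X|=O_t(1)$ in the partition (it allows $|X|\le\eps n$ and counts $k$-connectors wherever they lie, which is more flexible than insisting on linear connectors), and your parity upgrade is not correct as stated, since the ``missing in-neighbour'' of a low-in-degree vertex in an even $A$ could equally lie in $B$; in the paper the $t+1$ improvement drops out of the final inequality $3s\ge 4\delta^0(T)-n+2$, using that the floor is exact when $n-t\equiv 1\pmod 4$.
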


Let us highlight two interesting differences between Theorem~\ref{thm:main} and the situation in undirected graphs. Firstly, observe that the above result obtains a characterization of the probability that $T_p$ is Hamiltonian for all $p\in (0, 1)$. Such a characterization is not available in the case of undirected graphs, where only the case $p=1/2$ is understood for far. Even further, Dragani\'c, Keevash and M\"uyesser propose to study what happens as $p$ varies and suggest that there may be a phase transition between two competing extremal examples -- something which we do not see in the world of tournaments.

On the other hand, as the required minimum degree is increased beyond the Hamiltonicity threshold, graphs and tournaments exhibit significantly different behaviour. Take for simplicity $p=1/2$, and consider an $(n+t)$-regular $2n$-vertex graph $G$ formed by adding a $2t$-regular graph into the larger part of $K_{n-t, n+t}$. If a random vertex subset $S$ intersects the part of size $n+t$ in fewer elements than the part of size $n-t$, then $G[S]$ is not Hamiltonian. Hence, if $t$ is a constant, $G[S]$ is Hamiltonian with probability only $1/2+o(1)$, which shows that increasing the regularity of the graph has no large effect on the probability that $G[S]$ is Hamiltonian. As Theorem~\ref{thm:main} shows, the situation is very different in tournaments. Indeed, if $p=1/2$ and $\delta^0(T)\geq \lfloor \frac{n-1-t}{4}\rfloor+t$, then a random induced subtournament $T[S]$ is Hamiltonian with probability $1-2^{-t}$.

\medskip

Let us conclude the introduction by presenting an example which shows that Theorem~\ref{thm:main} is tight. If $n-t\not\equiv 1\bmod 4$, let us consider the tournament $T$ with $V(T)=A\cup B\cup X$, where $|A|=\lfloor \frac{n-t}{2}\rfloor, |B|=\lceil \frac{n-t}{2}\rceil$ and $|X|=t$, with all edges directed from $A$ to $B$, from $B$ to $X$, and from $X$ to $A$. Moreover, inside $A$ and $B$, we put tournament with minimum semidegree at least $\lfloor \frac{|A|-1}{2}\rfloor, \lfloor \frac{|B|-1}{2}\rfloor$, and we direct the edges inside $X$ arbitrarily. It is then not hard to verify that the minimum semidegree is determined by the in-degree of the vertices in $A$, and those vertices have at least $\lfloor \frac{|A|-1}{2}\rfloor+t$ incoming edges. Hence, $\delta^0(T)\geq \lfloor \frac{|A|-1}{2}\rfloor\geq \lfloor\frac{n-t-2}{4}\rfloor+t$. However, since $n-t\not\equiv 1\bmod 4$, we have that $\delta^0(T)\geq \lfloor\frac{n-t-1}{4}\rfloor+t$, showing that $T$ satisfies the necessary assumptions.

If $S\subseteq V(T)$ is a random subset, including vertices with probability $p$, and we have $S\cap A, S\cap B\neq \varnothing$, then the tournament $T_p=T[S]$ is Hamiltonian precisely when $S$ contains a vertex of $X$. Thus, we have that \[\Pb[T_p\text{ is Hamiltonian}]\leq \Pb[V(T_p)\cap X\neq \varnothing]+o(1)=1-(1-p)^{t}+o(1).\]

{
}

\section{Proof overview}\label{sec:overview}

In this section, we present a short overview of the proof, postponing the details to later sections. To prove Theorem~\ref{thm:main}, we consider two cases, depending on whether there is a balanced bipartition $V(T)=A\cup B$ such that almost all edges go from $A$ to $B$ or not. More precisely, we choose a small parameter $\eps\in (0, 1)$ and we ask whether there is a balanced bipartition $V(T)=A\cup B$ for which the number of edges from $A$ to $B$ is at least $e(A, B)\geq (1-\eps)|A||B|$. We call such a bipartition an \textit{almost-directed cut}. If there is no such bipartition, the subtournament $T_p$ is Hamiltonian with probability $1-o(1)$, as shown by the following lemma.

\begin{lemma}\label{lemma:bidense case}
For every $p, \eps\in (0, 1)$ and $t\geq 1$, and for a large integer $n$, we have the following. Let $T$ be an $n$-vertex tournament satisfying $\delta^0(T)\geq \lfloor \frac{n-1-t}{4}\rfloor+t$, and suppose $T$ does not have a balanced bipartition $V(T)=A\cup B$ with $e(A, B)\geq (1-\eps)|A||B|$. Then
\[\Pb[T_p\text{ is Hamiltonian}]\geq 1-o(1).\]
\end{lemma}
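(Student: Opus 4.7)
The plan is to show that $T_p$ is strongly connected with probability $1-o(1)$. The lemma then follows from Camion's theorem, which states that a tournament is Hamiltonian if and only if it is strongly connected.

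Failure of strong connectivity of $T_p$ corresponds to the existence of a nonempty proper subset $B \subsetneq V(T_p)$ with $N^+(B) \cap V(T_p) \subseteq B$ (where $N^+$ denotes the out-neighborhood in $T$). By independence of the vertex sampling, a union bound yields
\[
\Pb[T_p \text{ is not strongly connected}] \leq \sum_{\varnothing \neq B \subsetneq V(T)} p^{|B|}(1-p)^{|N^+(B) \setminus B|}.
\]
The task reduces to showing that this sum is $o(1)$ by establishing good lower bounds on $|N^+(B) \setminus B|$. The minimum semidegree gives $|N^+(B) \setminus B| \geq d^+(b) - |B| + 1 \geq n/4 - |B|$ for any $b \in B$, which controls the sum for small $|B|$. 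To exploit the bidense hypothesis, consider the ``dominating set'' $R(B) = V(T) \setminus B \setminus N^+(B)$ of vertices whose out-neighborhoods contain all of $B$; by definition, all $|R(B)| \cdot |B|$ cross-edges go from $R(B)$ to $B$. Placing $R(B)$ and $B$ on opposite sides of a balanced bipartition $V(T) = X \sqcup Y$ (with $R(B) \subseteq X$, $B \subseteq Y$), the bidense assumption forces $|R(B)| \cdot |B| \leq e_T(X, Y) \leq (1-\eps)n^2/4$, giving an upper bound on $|R(B)|$ and hence a lower bound on $|N^+(B) \setminus B|$ when $|B|$ is not too small.

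The main obstacle lies in the intermediate range of $|B|$, where neither of the above lower bounds on $|N^+(B) \setminus B|$ is, on its own, strong enough to overcome the combinatorial factor $\binom{n}{|B|}$ in the union bound. Overcoming this requires showing that subsets $B$ achieving near-extremal $|R(B)|$ have very restricted structure --- intuitively, they must be close to unions of blocks in a regularity-type partition of $V(T)$, on which the bidense assumption forces the reduced tournament to be pseudorandom. With this structural refinement, only polynomially many ``bad'' $B$'s need to be considered, and their contribution to the union bound is $o(1)$. Combining this with the easy cases (small and large $|B|$), we obtain $\Pb[T_p \text{ is not strongly connected}] = o(1)$ and conclude via Camion's theorem.
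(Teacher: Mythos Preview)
Your union-bound approach has a genuine gap in exactly the place you flag as ``the main obstacle.'' For $|B|=\beta n$ with $\beta$ a small constant (say $\beta=0.1$ and $p=1/2$), the only lower bound you have available is $|N^+(B)\setminus B|\ge \delta^0(T)-|B|\approx (1/4-\beta)n$, so the contribution of these sets to your sum is at least $\binom{n}{\beta n}(1-p)^{(1/4-\beta)n}\approx 2^{H(\beta)n}\cdot 2^{-(1/4-\beta)n}$, and $H(0.1)\approx 0.47 > 0.15 = 1/4-0.1$, so this diverges exponentially. Your bidense bound $|R(B)||B|\le (1-\eps)n^2/4$ does not help here either, since for $\beta<1/4$ it does not even force $|R(B)|<n-|B|$. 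Your proposed fix --- ``use a regularity-type partition to reduce to polynomially many bad $B$'' --- is not an argument but a hope; you give no mechanism by which a near-extremal $B$ must be a union of regularity blocks, nor any reason the reduced tournament would itself be strongly connected with the right quantitative control. Without that, the proof is incomplete precisely at its hardest step.

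The paper's proof avoids the union bound over subsets entirely. Instead, it proves a one-sided stability statement (Claim~\ref{claim:stability}): any tournament with $\delta^0\ge (1/4-\delta^2)n$ that fails to be Hamiltonian must have at least $(1/2-2\delta)n$ vertices of in-degree below $(1/4+2\delta)n$. It then shows (i) that degree statistics transfer from $T$ to $T_p$ with high probability (Claims~\ref{claim:controlling the degrees 1} and~\ref{claim:controlling the degrees 2}), and (ii) that if $T$ itself had $(1/2-3\delta)n$ low-in-degree vertices, one could build a balanced bipartition with $e(A,B)\ge(1-\eps)|A||B|$, contradicting the hypothesis. So with high probability $T_p$ inherits both $\delta^0(T_p)\ge(1/4-\delta^2)|V(T_p)|$ and ``few low-in-degree vertices,'' whence by the stability claim $T_p$ is Hamiltonian. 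This is completely elementary --- only Chernoff bounds and a short degree-counting argument --- and sidesteps the exponential enumeration that sinks your approach.
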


Hence, we can focus on the case when $T$ contains an almost-directed cut $(A, B)$. In this case, we would aim to show that $T_p$ is Hamiltonian by showing that it is strongly connected. By a simple edge-counting argument, the tournaments $T[A], T[B]$ are almost regular, and thus very strongly connected (and this carries over to $T_p$). However, since most edges go from $A$ to $B$, the main issue in showing that $T_p$ is strongly connected will be obtaining paths from $B$ to $A$. Thus, our goal will be to identify many edges (or short paths) which go from $B$ to $A$ and use them to show that $T_p$ is strongly connected.

To do this, we need the following two definitions. Firstly, we call a partition $V(T)=A\cup B\cup X$ \textit{$\eps$-good} if $|A|, |B|\geq (1-\eps)n/2$, the minimum semidegree of induced subtournaments on $A, B$ is large $\delta^0(T[A]), \delta^0(T[B])\geq (1/6-\eps)n$, and $e(A, B)\geq (1-\eps) |A||B|$. Further, given such a partition, a vertex $v\in V(T)$ is a \textit{$k$-connector} if $|N^+(v)\cap A|\geq k$ and $|N^-(v)\cap B|\geq k$, where $k$ is a large constant we will specify later. The following two lemmas are crucial for resolving this case.

The first lemma shows that a bipartition $V(T)=A\cup B$ with $e(A, B)\geq (1-\eps) |A||B|$ can be converted into an $\eps$-good partition, which is shown by a simple cleaning procedure.

\begin{lemma}\label{lemma:good partitions exist}
Let $\eps>0$ be a small constant. If $T$ is a tournament of minimum semidegree $\delta^0(T)\geq \lfloor \frac{n+2}{4}\rfloor$ in which there exist a balanced bipartition $V(T)=A_0\cup B_0$ with $e(A_0, B_0)\geq (1-\eps)|A_0||B_0|$, then $T$ contains an $\eps^{1/3}$-good partition.
\end{lemma}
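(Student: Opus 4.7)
The plan is a cleaning procedure: starting from $A_0 \cup B_0$, I would remove a small exceptional set $X$ of ``bad'' vertices so that $A = A_0 \setminus X$ and $B = B_0 \setminus X$ form an $\eps^{1/3}$-good partition. The bad vertices come in two flavors. First, the \emph{density-bad} vertices are those $v \in A_0$ with $d^-_{B_0}(v) \geq \eps^{1/2} |B_0|$ or $v \in B_0$ with $d^+_{A_0}(v) \geq \eps^{1/2} |A_0|$: these are vertices contributing disproportionately to backward edges. Since the total number of backward edges is at most $\eps |A_0||B_0|$, a straightforward averaging argument shows there are at most $\eps^{1/2}(|A_0| + |B_0|) \leq \eps^{1/2} n$ density-bad vertices.

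For a density-good $v \in A_0$, the tournament identity gives $d^+_{B_0}(v) = |B_0| - d^-_{B_0}(v) \geq (1-\eps^{1/2})|B_0|$. Combining with the upper bound $d^+_T(v) \leq n - 1 - \delta^0(T) \leq 3n/4$ yields $d^+_{A_0}(v) \leq (1/4 + \eps^{1/2}/2) n$, and symmetrically $d^-_{A_0}(v) \geq (1/4 - \eps^{1/2}/2) n$, which already exceeds the desired $(1/6 - \eps^{1/3}) n$. The only remaining worry is that a density-good vertex in $A_0$ might have $d^+_{A_0}(v)$ too small, so I call $v \in A_0$ \emph{degree-bad} if $d^+_{A_0}(v) \leq (1/6 - \eps^{1/3}/2) n$, with the symmetric definition for $v \in B_0$ using $d^-_{B_0}$. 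I bound the number $D$ of degree-bad vertices in $A_0$ using the identity $\sum_{v \in A_0} d^+_{A_0}(v) = \binom{|A_0|}{2}$. Since density-good vertices are capped at $(1/4 + \eps^{1/2}/2) n$, density-bad ones contribute at most $\eps^{1/2} n^2 / 4$ in total, and the gap between this cap and the degree-bad threshold is at least $1/12$, a simple rearrangement yields $D \cdot n / 12 \leq O(\eps^{1/2} n^2)$, i.e., $D = O(\eps^{1/2} n)$; the same holds for $B_0$.

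Let $X$ be the union of all bad vertices, so $|X| = O(\eps^{1/2} n)$, which is at most $\eps^{1/3} n / 2$ per side for sufficiently small $\eps$. The size condition $|A|, |B| \geq (1-\eps^{1/3}) n / 2$ is then immediate. The density condition follows since each $v \in A$ sends at most $\eps^{1/2} |B_0|$ backward edges to $B$, so the backward fraction is at most $\eps^{1/2} |B_0| / |B| \leq 2 \eps^{1/2} \leq \eps^{1/3}$. The semidegree condition follows because for $v \in A$ we have $d^+_A(v) \geq d^+_{A_0}(v) - |X \cap A_0| > (1/6 - \eps^{1/3}/2) n - \eps^{1/3} n / 2 = (1/6 - \eps^{1/3}) n$ (as $v$ is not degree-bad), and likewise $d^-_A(v) \geq (1/4 - \eps^{1/2}/2) n - \eps^{1/3} n / 2 \geq (1/6 - \eps^{1/3}) n$; the case $v \in B$ is symmetric.

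I expect the main obstacle to be the careful balancing of exponents. Each of the two cleaning steps loses a factor of $\eps^{1/2}$ (from Markov averaging for density-bad vertices, and from the tight slack in the cap $(1/4 + \eps^{1/2}/2)n$ on $d^+_{A_0}$ for density-good vertices), and the conclusion of $\eps^{1/3}$-goodness is achievable precisely because $\eps^{1/2} \ll \eps^{1/3}$: the $\eps^{1/3}/2$ buffer built into the degree-bad threshold absorbs the loss of up to $\eps^{1/3} n / 2$ vertices into $X$, while the constant gap of $1/12$ between the average in-side out-degree ($\approx n/4$) and the target floor ($n/6$) is what makes the counting for $D$ go through.
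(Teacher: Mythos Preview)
Your proposal is correct and follows essentially the same cleaning procedure as the paper: first remove vertices with too many backward cross-edges (your density-bad vertices correspond to the paper's $A_0^-$ and $B_0^+$), then remove vertices with too-low internal out-degree in $A_0$ or in-degree in $B_0$ (your degree-bad vertices correspond to the paper's $A_0^+$ and $B_0^-$), bounding both classes by $O(\eps^{1/2}n)$ via Markov-type averaging and the identity $\sum_{v\in A_0} d^+_{A_0}(v)=\binom{|A_0|}{2}$. The only differences are cosmetic choices of thresholds (the paper uses $n/5$ where you use $(1/6-\eps^{1/3}/2)n$) and that you define density-bad directly via backward edges rather than via low internal in-degree.
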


The next lemma shows that, as long as there are many paths from $B$ to $A$, either in the form of direct edges, or in the form of connectors, then the probability that $T_p$ is Hamiltonian is large.

\begin{lemma}\label{lemma:using connectors}
Let $p\in (0, 1), \eps\in (0, 10^{-2})$ and let $\sigma$ be a small constant. Then, for every $s\geq 1$, and every large enough $n$, we have the following two statements. 
\begin{enumerate}
    \item[(a)] If $T$ is a $n$-vertex tournament with minimum semidegree $\delta^0(T)\geq \lfloor \frac{n+2}{4}\rfloor$, with an $\eps$-good partition $V(T)=A\cup B\cup X$ and at least $s$ $k$-connectors, where $k\geq 2\log_{\frac{1}{1-p}}(s\sigma^{-1})$, then \[\Pb[T_p\text{ is Hamiltonian}]\geq 1-(1-p)^s-\sigma.\]
    \item[(b)] If $T$ is a $n$-vertex tournament with minimum semidegree $\delta^0(T)\geq \lfloor \frac{n+2}{4}\rfloor$, with an $\eps$-good partition $V(T)=A\cup B\cup X$ and a matching with at least $k$ edges directed from $B$ to $A$, then \[\Pb[T_p\text{ is Hamiltonian}]\geq 1-(1-p^2)^k-\sigma.\]
\end{enumerate}

\end{lemma}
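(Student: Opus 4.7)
The overall strategy is to establish Hamiltonicity of $T_p$ by showing it is strongly connected via a Hamilton cycle assembled from Hamilton paths inside $T_p[A']$ and $T_p[B']$ (where $A' = A \cap V(T_p)$, $B' = B \cap V(T_p)$, $X' = X \cap V(T_p)$), connected by forward transitions from $A'$ to $B'$ and by a single back-transition from $B'$ to $A'$. The forward direction is straightforward since $e(A, B) \geq (1 - \eps) |A| |B|$ supplies many surviving $A \to B$ edges; the delicate part is producing the back-transition, and this is exactly what the connectors in part (a) and the matching in part (b) supply.

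First I would establish the structural properties that hold with probability at least $1 - \sigma/3$ from the $\eps$-good partition alone. Via a Chernoff bound and a union bound over $v \in A \cup B$, both $T_p[A']$ and $T_p[B']$ have minimum semidegree at least $(1 - o(1)) p (1/6 - \eps) n$, which is at least $(1/3 - O(\eps)) |A'|$ and similarly for $|B'|$ --- a constant factor above the $|A'|/4$ Hamiltonicity threshold. Another Chernoff bound shows that each $x \in X'$ has $\Omega(pn)$ in-neighbors and out-neighbors inside both $A'$ and $B'$. Partitioning $X' = X_A' \cup X_B'$ according to the ``type'' of each $x$ (i.e.\ whether its edges to $A$ and $B$ naturally place it on the $A$-side or the $B$-side of the cycle) and applying a standard rotation-extension argument, the resulting sub-tournaments $T_p[A' \cup X_A']$ and $T_p[B' \cup X_B']$ enjoy a form of ``path flexibility'': for a $(1 - o(1))$-fraction of endpoint pairs $(u, u')$ within each such set, there is a Hamilton path from $u$ to $u'$.

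Next I would compute the probability of a useful back-transition. In part (a), let $v_1, \ldots, v_s$ be the $s$ connectors. The probability that no $v_i$ lies in $V(T_p)$ equals $(1-p)^s$, and conditional on $v_i \in V(T_p)$ the probability that $v_i$ has no surviving out-neighbor in $A$ or no surviving in-neighbor in $B$ is at most $2(1-p)^k \leq 2\sigma^2/s^2$, by the choice $k \geq 2\log_{1/(1-p)}(s/\sigma)$. A union bound over $i \in [s]$ then yields
\[
\Pb[\text{no useful surviving connector}] \;\leq\; (1-p)^s + s \cdot p \cdot 2(1-p)^k \;\leq\; (1-p)^s + \sigma/3.
\]
In part (b), the $k$ matching edges are vertex-disjoint, so the events that they survive into $T_p$ are independent of probability $p^2$ each, and at least one survives with probability $1 - (1 - p^2)^k$.

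Finally, conditional on a useful back-transition and on the structural properties, the Hamilton cycle is assembled explicitly. In case (a), let $v$ be a useful surviving connector with $a \in N^+(v) \cap A'$ and $b \in N^-(v) \cap B'$. By the path flexibility, choose a Hamilton path $P_A$ in $T_p[(A' \cup X_A') \setminus \{v\}]$ from $a$ to some vertex $a'$ having an out-neighbor $b' \in B'$, and a Hamilton path $P_B$ in $T_p[(B' \cup X_B') \setminus \{v\}]$ from $b'$ to $b$; concatenation gives the Hamilton cycle $a \to P_A \to a' \to b' \to P_B \to b \to v \to a$. Case (b) is identical except that $v$ is not removed and the direct matching edge $b \to a$ replaces the two-step back-transition. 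The principal obstacle is the path flexibility from the second paragraph: the classical Hamilton-connectivity theorem for tournaments requires minimum semidegree above $|A'|/2$, which we do not have, but the rotation-extension technique can yield Hamilton paths for a large (positive-density) family of endpoint pairs when the semidegree sits only slightly above the $|A'|/4$ threshold, and this weaker property is exactly what the construction needs.
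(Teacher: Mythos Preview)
Your plan diverges from the paper at the very first step, and the divergence creates the obstacle you yourself flag at the end. The paper never constructs a Hamilton cycle explicitly; it uses the fact (Camion's theorem) that a tournament is Hamiltonian if and only if it is strongly connected. So after checking via Chernoff that $\delta^0(T_p[A'])\geq \tfrac{3}{10}|A'|$, $\delta^0(T_p[B'])\geq \tfrac{3}{10}|B'|$, $\delta^0(T_p)\geq |S|/5$, and $|X'|<|S|/5$, the paper needs only \emph{one} directed path from $A'$ to $B'$ and one from $B'$ to $A'$: each of $T_p[A']$, $T_p[B']$ is strongly connected on its own, each $x\in X'$ has an in- and out-neighbour in $A'\cup B'$ by the semidegree bound, and the two cross-paths glue everything together. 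The back-path is then supplied exactly as you say, by a surviving connector or matching edge, and a union bound over the $s$ connectors handles the conditioning in part~(a).

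Your route, building the Hamilton cycle by concatenating Hamilton paths, runs into two real gaps. First, your claim that ``each $x\in X'$ has $\Omega(pn)$ in-neighbours and out-neighbours inside both $A'$ and $B'$'' is not implied by the definition of an $\eps$-good partition, which places no constraint on vertices in $X$; a vertex of $X$ may well have all out-neighbours in $B$ and all in-neighbours in $A$, so your $X_A'\cup X_B'$ absorption is not justified. Second, the ``path flexibility'' you need is a Hamilton path in $T_p[A'\cup X_A']$ starting at a \emph{specific} vertex $a$ (one of the $O(1)$ surviving out-neighbours of the connector), and rotation--extension at semidegree $\approx |A'|/3$ does not obviously deliver this; you would need something like Thomassen's theorem that $4$-connected tournaments are Hamilton-connected, applied after a careful treatment of the $X'$ vertices. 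All of this machinery is bypassed by the strong-connectivity reduction.
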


With all of the above setup, in order to prove Theorem~\ref{thm:main}, it is suffices to show that $T$ contains at least $t$ $k$-connectors (or $t+1$ if $n-t\equiv 1\bmod 4$), and then apply Lemma~\ref{lemma:using connectors}. And indeed, in the final proof of Theorem~\ref{thm:main}, we show that if $T$ has an $\eps^{1/3}$-good partition, then either it contains $t$ connectors, or it contains a very large matching of edges from $B$ to $A$. This matching will be so large that with very good probability, one of the edges of the matching will be present in $T_p$, which will allow to ``go back'' from $B$ to $A$ and thus establish a Hamilton cycle in $T_p$. 

The rest of the paper is organized as follows: in Section~\ref{sec:bidense} we show that tournaments with no almost-directed cuts satisfy Theorem~\ref{thm:main} by giving a proof of Lemma~\ref{lemma:bidense case}. Then, in Section~\ref{sec:almost directed} we give proofs of Lemmas~\ref{lemma:good partitions exist} and~\ref{lemma:using connectors}. Finally, in Section~\ref{sec:completing} we complete the proof of Theorem~\ref{thm:main}.

\section{Tournaments with no almost-directed cuts}\label{sec:bidense}

The main goal of this section is to give a proof of Lemma~\ref{lemma:bidense case}, which we prepare with a sequence of lemmas. Before we do that, let us remark that Lemma~\ref{lemma:bidense case} can also be proven via the regularity method (see e.g. Section 3.1 of \cite{DKM} for a similar argument). However, we opt to give a more elementary, probabilistic proof of this lemma, which relies only on standard concentration inequalities.

We begin by showing that the degree statistics of the tournament $T$ do not change under sampling. More precisely, we show that with high probability, that for every vertex $v$ of in-degree $\alpha n$ in $T$, the in-degree of $v$ in $T_p$ is $(\alpha\pm o(1)) |V(T_p)|$. Then, we also show that if $T$ has $\alpha n$ vertices of in-degree at most $\beta n$, then we expect $T_p$ also has at most $(\alpha+o(1))|V(T_p)|$ vertices of in-degree at most $(\beta-o(1))|V(T_p)|$. 

\begin{claim}\label{claim:controlling the degrees 1}
With high probability, for each $v\in V(T_p)$ we have that \[\left|\frac{|N^-(v)\cap V(T_p)|}{|V(T_p)|}- \frac{d^-(v)}{n}\right|\leq o(1).\]
\end{claim}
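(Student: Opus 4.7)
The proposal is to prove this by a straightforward application of Chernoff's inequality followed by a union bound over the $n$ vertices. For any fixed $v \in V(T)$, the random variable $|N^-(v) \cap V(T_p)|$ is the sum of $d^-(v)$ independent Bernoulli$(p)$ random variables (one for each in-neighbor of $v$, with independence holding regardless of whether $v$ itself lies in $V(T_p)$, since $v \notin N^-(v)$). Likewise, $|V(T_p)|$ is a sum of $n$ independent Bernoulli$(p)$ variables with mean $pn$. Thus the two quantities are binomial and therefore highly concentrated.

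First, I would fix a slowly decreasing error parameter, say $\delta = n^{-1/4}$. Since we are in the setting of Lemma~\ref{lemma:bidense case}, every vertex satisfies $d^-(v) \geq \delta^0(T) \geq \lfloor(n-1-t)/4\rfloor + t = \Omega(n)$, so $p \cdot d^-(v) = \Omega(n)$. Applying the multiplicative Chernoff bound,
\[
\Pb\bigl[\,\bigl||N^-(v) \cap V(T_p)| - p\,d^-(v)\bigr| \geq \delta \cdot p\,d^-(v)\bigr] \leq 2\exp\bigl(-\tfrac{1}{3}\delta^2 p\,d^-(v)\bigr) = \exp(-\Omega(n^{1/2})),
\]
and similarly for $|V(T_p)|$ around its mean $pn$. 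Taking a union bound over all $n$ vertices $v$ (and also over the event for $|V(T_p)|$), the probability of any failure is at most $(n+1)\exp(-\Omega(n^{1/2})) = o(1)$.

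On the complement of this failure event, we have simultaneously $|N^-(v) \cap V(T_p)| = p\,d^-(v)(1 + O(\delta))$ for every $v$, and $|V(T_p)| = pn(1 + O(\delta))$. Taking the ratio,
\[
\frac{|N^-(v) \cap V(T_p)|}{|V(T_p)|} = \frac{p\,d^-(v)(1+O(\delta))}{pn(1+O(\delta))} = \frac{d^-(v)}{n} + O(\delta),
\]
where I used that $d^-(v)/n \leq 1$ to convert the multiplicative error into an additive one. Since $\delta = o(1)$, this establishes the claim for every $v \in V(T)$ simultaneously, which in particular covers every $v \in V(T_p)$.

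There is no genuine obstacle here: the only mild point to check is that $d^-(v)$ is large enough (linear in $n$) to make Chernoff deliver an exponentially small tail that survives the union bound over $v$, which is handled by the minimum semidegree hypothesis carried over from the lemma we are working toward. The same argument (with $N^-(v)$ replaced by $N^+(v)$) will give the analogous out-degree statement, which is presumably stated as a companion claim later.
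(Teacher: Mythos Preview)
Your proposal is correct and is essentially the same argument as the paper's: Chernoff on $|N^-(v)\cap V(T_p)|$ and on $|V(T_p)|$, a union bound over all $v$, and then the ratio computation. The only cosmetic difference is that you take $\delta=n^{-1/4}$ tending to zero, whereas the paper fixes an arbitrary small constant $\sigma$ and shows the bound holds with probability $1-o(1)$ for that $\sigma$; both formulations encode the same $o(1)$ conclusion.
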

\begin{proof}
Let $\sigma>0$ be a small constant, and let us prove that 
\[\Pb\left[\left|\frac{|N^-(v)\cap V(T_p)|}{|V(T_p)|}- \frac{d^-(v)}{n}\right|\leq 3\sigma\text{ for all }v\right]\leq 1-2(n+1)\exp\Big( - \frac{\sigma^2 p(n-1)}{12}\Big)=1-o(1).\]

The random variable $X=|N^-(v)\cap V(T_p)|$ is a sum of $d^-(v)$ Bernoulli indicator variables with mean $p$. Thus, by Chernoff bounds (see e.g. Section 2.1 of \cite{JLR}), we have 
\[\Pb\Big[\big|X -pd^-(v)\big|\geq \sigma pd^-(v)\Big]\leq 2\exp\left(-\frac{\sigma ^2\cdot pd^-(v)}{3}\right)\leq 2\exp\left(-\frac{\sigma^2p (n-1)}{12}\right),\]
where we have used that $d^-(v)\geq \lfloor \frac{n+2}{4}\rfloor\geq \frac{n-1}{4}$ in the last inequality. 

Similarly, the random variable $|V(T_p)|$ is a sum of $n$ Bernoulli indicator variables with mean $p$, and Chernoff bounds imply that \[\Pb\Big[\big||V(T_p)| -pn\big|\geq \sigma pn\Big]\leq 2\exp\left(-\frac{\sigma^2\cdot pn}{3}\right)\leq 2\exp\left(-\frac{\sigma^2p n}{3}\right),\]
Hence, by the union bound over all vertices $v$, together with the event that $|V(T_p)|$ is close to its expectation, we conclude that with probability $1-2(n+1)\exp\big( -{\sigma^2 p(n-1)}/{12}\big)=1-o(1)$, for each vertex $v$ we have that $\big||N^-(v)\cap V(T_p)|- p d^-(v)\big|\leq \sigma pd^-(v)$ and $\big||V(T_p)| -pn\big|\leq \sigma pn$. Then, for each $v$ we have
\begin{align*}
&\frac{|N^-(v)\cap V(T_p)|}{|V(T_p)|}- \frac{d^-(v)}{n}\leq \frac{(1+ \sigma) pd^-(v)}{(1- \sigma) pn}- \frac{d^-(v)}{n}= \frac{d^-(v)}{n}\left(\frac{1+ \sigma}{1-\sigma}- 1\right)\leq 3\sigma,\text{ and}\\
&\frac{|N^-(v)\cap V(T_p)|}{|V(T_p)|}- \frac{d^-(v)}{n}\geq \frac{(1- \sigma) pd^-(v)}{(1+ \sigma) pn}- \frac{d^-(v)}{n}= \frac{d^-(v)}{n}\left(\frac{1- \sigma}{1+\sigma}- 1\right)\geq -3\sigma.\qedhere
\end{align*}
\end{proof}

\begin{claim}\label{claim:controlling the degrees 2}
Let $\alpha, \beta, \delta \in (0, 1)$ be fixed real numbers and let $n$ be a large integer. Suppose that an $n$-vertex tournament $T$ contains at most $\alpha n$ vertices with $d^{-}(v)\leq \beta n$. Then, with probability at least $1-o(1)$, the tournament $T_p$ contains at most $(\alpha +o(1)) |V(T_p)|$ vertices with $|N^{-}(v)\cap V(T_p)|\leq (\beta -\delta) |V(T_p)|$. 
\end{claim}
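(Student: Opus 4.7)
The plan is to reduce the statement to \textbf{Claim~\ref{claim:controlling the degrees 1}} via a short reduction, and then handle the remaining piece with a direct Chernoff bound. The intuition is that Claim~\ref{claim:controlling the degrees 1} already guarantees that every vertex $v\in V(T_p)$ has its normalized in-degree $|N^-(v)\cap V(T_p)|/|V(T_p)|$ within $o(1)$ of the original density $d^-(v)/n$, so any vertex that witnesses the bad event in $T_p$ must have been ``low-degree'' already in $T$.

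More concretely, I would first apply Claim~\ref{claim:controlling the degrees 1} to obtain that, with probability $1-o(1)$, every $v\in V(T_p)$ satisfies
\[\left|\frac{|N^-(v)\cap V(T_p)|}{|V(T_p)|}-\frac{d^-(v)}{n}\right|\leq \delta/2,\]
which is valid for $n$ large since the error in Claim~\ref{claim:controlling the degrees 1} can be made smaller than any fixed constant (e.g. by taking the parameter $\sigma$ in its proof to be $\delta/6$). Call this event $\mathcal{E}_1$. On $\mathcal{E}_1$, any vertex $v\in V(T_p)$ with $|N^-(v)\cap V(T_p)|\leq (\beta-\delta)|V(T_p)|$ must satisfy $d^-(v)\leq (\beta-\delta/2)n<\beta n$, hence must belong to the set $B:=\{v\in V(T):d^-(v)\leq \beta n\}$, which by hypothesis has $|B|\leq \alpha n$.

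It therefore remains to bound $|V(T_p)\cap B|/|V(T_p)|$ by $\alpha+o(1)$ with high probability. Since $|V(T_p)\cap B|$ is a sum of $|B|\leq \alpha n$ independent Bernoulli($p$) variables, a Chernoff bound (exactly in the style of the proof of Claim~\ref{claim:controlling the degrees 1}) gives $|V(T_p)\cap B|\leq (\alpha+o(1))pn$ with probability $1-o(1)$. A second Chernoff bound yields $|V(T_p)|\geq (1-o(1))pn$ with probability $1-o(1)$. Taking the ratio and applying a union bound with $\mathcal{E}_1$ gives the desired inequality.

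I do not anticipate any genuine obstacle: the argument is a routine chaining of standard concentration estimates, and the only care needed is to couple the $o(1)$ error in Claim~\ref{claim:controlling the degrees 1} with the fixed $\delta$, which is handled by choosing the auxiliary parameter small enough. The statement is essentially a ``transfer'' of the degree distribution of $T$ to $T_p$, and the work is done by Claim~\ref{claim:controlling the degrees 1} together with the concentration of $|V(T_p)\cap B|$ and $|V(T_p)|$ around their means.
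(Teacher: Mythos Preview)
Your proposal is correct and follows essentially the same approach as the paper's proof: invoke Claim~\ref{claim:controlling the degrees 1} so that any vertex with low in-degree in $T_p$ must already lie in the low-in-degree set $X=\{v:d^-(v)\le\beta n\}$ of $T$, then bound $|X\cap V(T_p)|$ and $|V(T_p)|$ by standard concentration and take the ratio. The only cosmetic difference is that the paper phrases the tail bound on $|X\cap V(T_p)|$ via Hoeffding's inequality rather than Chernoff, which is immaterial here.
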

\begin{proof}
Our goal is to bound the size of $X_p=\{v\in V(T_p):|N^{-}(v)\cap V(T_p)|\leq (\beta -\delta) |V(T_p)|\}$. Let $\sigma \ll \delta$ be a small positive constant -- we will prove that $\Pb\big[|X_p|\leq (\alpha+3\sigma)|V(T_p)|\big]\geq 1-\sigma$. 

Let us assume that the outcome of Claim~\ref{claim:controlling the degrees 1} holds, i.e. that for every vertex $v\in V(T_p)$ we have \begin{equation}\label{eq:1}
    \left|\frac{|N^-(v)\cap V(T_p)|}{|V(T_p)|}- \frac{d^-(v)}{n}\right|\leq \sigma.
\end{equation} By Claim~\ref{claim:controlling the degrees 1}, this happens with probability at least $1-\sigma/3$, if $n$ is large enough. Also, let us assume that $|V(T_p)|\geq (1-\sigma)pn$, which also happens with probability at least $1-\sigma/3$, due to the standard concentration inequalities (as in the proof of Claim~\ref{claim:controlling the degrees 1}). Finally, define $X=\{v\in V(T):d^-(v)\leq \beta n\}$ to be the set of low in-degree vertices, for which we know that $|X|\leq \alpha n$ by the assumption of Claim~\ref{claim:controlling the degrees 2}.

By (\ref{eq:1}) and $\sigma\ll \delta$, we know that each vertex of $X_p$ must also be in $X$, i.e. $X_p\subseteq X\cap V(T_p)$. So, we have $|X_p|\leq |X\cap V(T_p)|$, where we note that $|X\cap V(T_p)|$ is a sum of $|X|$ Bernoulli random variables with mean $p$. Hoeffding's inequality implies
\[\Pb\big[|X_p|\geq (\alpha+\sigma)p n \big]\leq \Pb\big[|X\cap V(T_p)|\geq p|X|+\sigma pn\big]\leq \exp\left(-\frac{2(\sigma p n)^2}{|X|} \right)\leq 2\exp\big(-2\sigma^2 p^2 n\big)\leq \sigma/3.\]
To complete the proof, observe that if $|X_p|\leq (\alpha+\sigma) pn$, then 
\[|X_p|\leq (\alpha+\sigma)p n\leq \frac{\alpha+\sigma}{1-\sigma}|V(T_p)|\leq (\alpha+3\sigma)|V(T_p)|.\]
Hence, if $n$ is large enough, with probability at least $1-\sigma$ we have $|X_p|\leq (\alpha+3\sigma)|V(T_p)|$.
\end{proof}

We need one more preparatory claim before the proof of Lemma~\ref{lemma:bidense case}. 

\begin{claim}\label{claim:stability}
Let $\delta\in (0, 1/2)$ be a constant and let $T$ be an $n$-vertex tournament with $\delta^0(T)\geq (1/4-\delta^2)n$ with no Hamilton cycle. Then, $T$ contains at least $(1/2-2\delta) n$ vertices of in-degree smaller than $(1/4+2\delta)n$. 
\end{claim}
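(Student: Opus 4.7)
The plan is to exploit the tournament-specific equivalence (Camion's theorem) that $T$ being Hamiltonian is the same as $T$ being strongly connected. Since $T$ has no Hamilton cycle, $T$ is not strongly connected, so there is a partition $V(T)=A\cup B$ with $A,B\neq\varnothing$ and every edge between $A$ and $B$ directed from $A$ to $B$ (take $A$ to be the vertex set of the first strongly connected component in the topological ordering of the condensation of $T$). I will argue that almost all of $A$ already consists of low in-degree vertices, once it is shown that $A$ is roughly half of $V(T)$.

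The first step is to show that $|A|$ and $|B|$ are both close to $n/2$. For $v\in A$, no edges enter $A$ from $B$, so $d^-(v)=d^-_A(v)$; since the sum of in-degrees in the tournament $T[A]$ is exactly $\binom{|A|}{2}$, the minimum in-degree in $T[A]$ is at most the average $(|A|-1)/2$. Combined with $d^-(v)\geq (1/4-\delta^2)n$, this yields $|A|\geq (1/2-2\delta^2)n+1$. Applying the symmetric argument to out-degrees of vertices in $B$ (for which $d^+(v)=d^+_B(v)$) gives $|B|\geq (1/2-2\delta^2)n+1$. Since $|A|+|B|=n$, both $|A|$ and $|B|$ lie in $[(1/2-2\delta^2)n,(1/2+2\delta^2)n]$.

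The second step is a simple double-counting inside $A$. Let $L=\{v\in A:d^-(v)<(1/4+2\delta)n\}$. Splitting $\sum_{v\in A}d^-(v)=\binom{|A|}{2}$ according to whether $v\in L$ (use $d^-(v)\geq (1/4-\delta^2)n$) or $v\in A\setminus L$ (use $d^-(v)\geq (1/4+2\delta)n$) gives
\[
(1/4+2\delta)|A|n-|L|(2\delta+\delta^2)n\;\leq\;\binom{|A|}{2}\;\leq\;|A|(|A|-1)/2.
\]
Plugging $|A|\leq (1/2+2\delta^2)n$ into the right-hand side, the bracketed factor $(1/4+2\delta)n-(|A|-1)/2$ is at least $(2\delta-\delta^2)n$, so rearranging yields $|L|\geq |A|\cdot(2-\delta)/(2+\delta)\geq |A|(1-\delta)$. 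Combining with $|A|\geq (1/2-2\delta^2)n$ and simplifying (using $\delta\in(0,1/2)$) gives $|L|\geq (1/2-2\delta)n$, which is the desired conclusion.

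The only real obstacle is the first step: one must leverage the hypothesis that \emph{every} vertex has both in- and out-degree at least $(1/4-\delta^2)n$ in order to force both sides of the bipartition to be near-balanced. This is exactly what distinguishes tournaments from graphs here, and it is what makes the extremal structure (two near-halves with all cross-edges in one direction) essentially the only obstruction. Once balance is established, the counting in the second step is completely routine.
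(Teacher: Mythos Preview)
Your proposal is correct and follows essentially the same approach as the paper's proof: both use Camion's theorem to extract a directed cut $(A,B)$, use the minimum semidegree condition together with $\sum_{v\in A}d^-(v)=\binom{|A|}{2}$ (and the analogous identity for $B$) to pin $|A|,|B|$ in $[(1/2-2\delta^2)n,(1/2+2\delta^2)n]$, and then double-count in-degrees inside $A$ to bound the number of low-in-degree vertices. The only cosmetic difference is that the paper bounds the number $r$ of \emph{high}-in-degree vertices in $A$ and shows $r\le \delta n$, whereas you bound $|L|$ directly via the ratio $(2-\delta)/(2+\delta)\ge 1-\delta$; the arithmetic is equivalent.
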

\begin{proof}
If $T$ is not Hamiltonian, it is not strongly connected and so there exists a directed cut $(A, B)$. Since each vertex $v\in V(T)$ satisfies $d^+(v)\geq n/4-\delta^2 n$ and there are no edges going from $B$ to $A$, we have that 
\[\binom{|B|}{2}=\sum_{v\in B}d^+(v)\geq |B|\cdot (n/4-\delta^2 n),\]
which shows that $|B|\geq n/2-2\delta^2 n$. Thus, $|A|=|V(T)|-|B|\leq n-n/2+2\delta^2 n=n/2+2\delta^2 n$. 

Similarly, by computing the sum of in-degrees of the vertices in $A$ and using that $d^-(v)\geq n/4-\delta^2 n$ for all $v\in A$, we find that $\binom{|A|}{2}=\sum_{v\in A}d^-(A)\geq |A|\cdot (n/4-\delta^2 n)$, which shows that $|A|\geq n/2-2\delta^2 n\geq n/2-\delta n$ (since $\delta\leq 1/2$).

Let $r$ be the number of vertices $v\in A$ with $d^-(v)\geq (1/4+2\delta) n$. To finish the proof, we need to show that $r\leq \delta n$, since this implies that at least $|A|-\delta n\geq n/2-2\delta n$ vertices have $d^-(v)\leq (1/4+2\delta) n$. 

Let us consider again the sum of in-degrees of vertices $v\in A$, now taking into account the $r$ vertices of high in-degree. 
Since every vertex of $A$ has in-degree at least $\delta^0(T)\geq (1/4-\delta^2)n$, and there exist $r$ vertices which have additional $2\delta n$ incoming edges, we have
\[\binom{|A|}{2} = \sum_{v\in A}d^-(v)\geq |A| (n/4-\delta^2 n) + r\cdot 2\delta n.\]
Rearranging, we find that 
\[r\leq \frac{1}{2\delta n}\cdot |A|\left(\frac{|A|-1}{2}-\frac{n}{4}+\delta^2 n\right) \leq \frac{1}{2\delta n}\cdot (1/2+2\delta^2) n \cdot 2\delta^2 n  \leq \delta n,\]
where we have used that $|A|\leq n/2+2\delta^2 n$ in the second inequality. This shows $r\leq \delta n$ and thus completes the proof.
\end{proof}

\begin{proof}[Proof of Lemma~\ref{lemma:bidense case}.]
Set $\delta=\eps/20$ and set $\alpha=1/2-3\delta, \beta=1/4+3\delta$. We have two cases — either $T$ contains more than $\alpha n$ vertices with $d^-(v)\leq \beta n$, or not. If $T$ contains at least $\alpha n$ such vertices, we will show that there exists a balanced bipartition $V(T)=A\cup B$ such that $e(A, B)\geq (1-\eps) |A||B|$.

Namely, if we have at least $(1/2-3\delta)n$ vertices with $d^+(v)\geq n-1-d^-(v)\geq n-1-\beta n$, let $A$ be any set of size $n/2$ which contains more than $\alpha n$ such vertices, and let $B=V(T)\backslash A$. Then, we have
\[\binom{|A|}{2}+e(A, B)=\sum_{a\in A}d^+(a)> \alpha n \cdot (n-1-\beta n).\]
Thus, we have
\[e(A, B)\geq \Big(\frac{1}{2}-3\delta\Big)\Big(\frac{3}{4}-4\delta\Big)n^2-\binom{n/2}{2}\geq \frac{3}{8}n^2-5\delta n^2-\frac{n^2}{8}\geq (1-20\delta)\frac{n^2}{4}.\]
Since $20\delta\leq \eps$, we have a balanced bipartition $V(T)=A\cup B$ with $e(A, B)\geq (1-\eps) |A||B|$.

Thus, we may assume that we have at most $\alpha n$ vertices with $d^-(v)\leq \beta n$. By Claim~\ref{claim:controlling the degrees 1}, with probability $1-o(1)$, for each $v\in V(T_p)$ we have that $|N^-(v)\cap V(T_p)|\geq (1/4- \delta^2) n'$ and $|N^+(v)\cap V(T_p)|\geq (1/4- \delta^2) n'$, where $n'$ denotes the number of vertices of $T_p$. Hence, we also have $\delta^0(T_p)\geq (1/4-\delta^2) n'$.

Further, by Claim~\ref{claim:controlling the degrees 2}, with probability $1-o(1)$, applied with $\alpha, \beta$ and $\delta$, $T_p$ contains fewer than $(\alpha+o(1)) n'\leq (1/2-2\delta)n'$ vertices of in-degree smaller than $(\beta-\delta)n'= (1/4+2\delta)n'$. But then, by Claim~\ref{claim:stability}, we conclude that $T_p$ must be Hamiltonian. So, $T_p$ is Hamiltonian with probability at least $1-o(1)$, as needed.
\end{proof}

\section{Tournaments with almost-directed cuts}\label{sec:almost directed}

In this section, we prove Lemmas~\ref{lemma:good partitions exist} and~\ref{lemma:using connectors}. Recall, Lemma~\ref{lemma:good partitions exist} states that if an almost-directed cut exists in a tournament $T$, then there also exist $\eps^{1/3}$-good partitions.

\begin{proof}[Proof of Lemma~\ref{lemma:good partitions exist}.]
In this proof, we will identify sets of vertices of $T[A_0], T[B_0]$ which have too small in-degree or out-degree, and eliminate them in order to form sets $A$ with $\delta^0(T[A])\geq n/6$, $\delta^0(T[B])\geq n/6$. To do this, let $\delta=\eps^{1/2}$ and let $A_0^-=\{v\in A_0: |N^-(v)\cap A_0|\leq (1/4-\delta)n\}$ and $A_0^+=\{v\in A_0: |N^+(v)\cap A_0|\leq n/5\}$.

Let us show $|A_0^-|\leq \delta n/4$. Since $d^-(v)\geq \lfloor \frac{n+2}{4}\rfloor$ for all $v\in V(T)$, each vertex of $A_0^-$ has at least $\delta n$ incoming edges from $B_0$. Since $e(B_0, A_0)\leq \eps |A_0||B_0|=\eps n^2/4$, we find that $|A_0^-|\cdot \delta n\leq \eps n^2/4$, implying $|A_0^-|\leq \delta n/4$, since $\delta=\eps^{1/2}$. 

Next, we show that $|A_0^+|\leq 15\delta n$. Observe that vertices in $A_0^-$ have $|N^+(v)\cap A_0|\leq |A_0|\leq n/2$ and vertices outside $A_0^-$ have $|N^+(v)\cap A_0|\leq |A_0|-d^-(v)\leq (1/4+\delta)n$. Thus, 
\begin{align*}
\binom{|A_0|}{2}=\sum_{v\in A_0}|N^+(v)\cap A_0|&\leq |A_0^+|\cdot \frac{n}{5}+|A_0^-|\cdot \frac{n}{2}+(|A_0|-|A_0^+|-|A_0^-|)\cdot \Big(\frac{n}{4}+\delta n\Big)\\
&\leq |A_0|\cdot \Big(\frac{n}{4}+\delta n\Big) - |A_0^+|\cdot \frac{n}{20}+\frac{\delta n^2}{8},
\end{align*}
where we have used that $|A_0^-|\leq \delta n/4$. Rearranging, we find that 
\[|A_0^+|\cdot \frac{n}{20}\leq |A_0|\cdot \Big(\frac{n}{4}+\delta n-\frac{|A_0|-1}{2}\Big) + \frac{\delta n^2}{8}\leq \frac{3}{4}\delta n^2,\]
where we used that $|A_0|=n/2$. We conclude that $|A_0^+|\leq 15\delta n$. 

This shows that $|A_0^+\cup A_0^-|\leq 16 \delta n$. Thus, if $A=A_0\backslash (A_0^-\cup A_0^+)$, then every vertex $v\in A$ has $|N^+(v)\cap A_0|, |N^-(v)\cap A_0|\geq n/5$. Moreover, since at most $16\delta n$ neighbours of $v$ are in $A_0\backslash A$, we see that $|N^+(v)\cap A|, |N^-(v)\cap A|\geq n/5-16\delta n\geq n/6$, as long as $\delta$ is small enough.

Similarly, we can define $B_0^+=\{v\in B_0: |N^+(v)\cap B_0|\leq (1/4-\delta)n\}$ and $B_0^-=\{v\in B_0: |N^-(v)\cap B_0|\leq n/5\}$. In a completely analogous way as above, we can obtain bound $|B_0^+|\leq \delta n/4$ and $|B_0^-|\leq 15\delta n$, ultimately concluding that the set $B=B_0\backslash (B_0^-\cup B_0^+)$ induces a subtournament of $T$ with minimum degree $n/6$.

If $X=V(T)\backslash (A\cup B)$, then we claim that $A, B$ and $X$ form an $\eps^{1/3}$-good partition. We have already verified that $\delta^0(T[A]), \delta^0(T[B])\geq n/6$. Also, since $\delta=\eps^{1/2}$ and $\eps$ is sufficiently small, we can easily lower bound the size of $A$ and $B$ so that $|A|, |B|\geq \frac{n}{2}-16\eps^{1/2}n\geq (1-\eps^{1/3})n/2$. Finally, there are at most $2\cdot 16 \eps^{1/2} n^2$ edges touching the vertices removed from $A_0$ and $B_0$, and thus $e(A, B)\geq e(A_0, B_0)-32\eps^{1/2}n^2\geq (1-\eps^{1/3})|A||B|$. This verifies that $V(T)=A\cup B\cup X$ is indeed an $\eps^{1/3}$-good partition and completes the proof. 
\end{proof}

Let us now turn to the proof of Lemma~\ref{lemma:using connectors}. To prepare its proof, we will identify a collection of bad events which prevent $T_p$ from being Hamiltonian. As long as these bad events do not occur, we will deterministically show that $T_p$ is Hamiltonian.

If $T$ is a tournament with minimum degree at least $\lfloor \frac{n+2}{4}\rfloor$ and an $\eps$-good partition $V(T)=A\cup B\cup X$, and $S$ is a random vertex subset, then we define the bad events as follows:
\begin{itemize}
    \item let $\cB_1$ be the event that $|S\cap X|\geq |S|/5$,
    \item let $\cB_2$ be the event that $\delta^0(T[A\cap S])< \frac{3}{10}|A\cap S|$, $\delta^0(T[B\cap S])< \frac{3}{10}|B\cap S|$, or $\delta^0(T[S])< |S|/5$,
    \item let $\cB_3$ be the event that in $T_p$ there is no directed path from $A\cap S$ to $B\cap S$, and
    \item let $\cB_4$ be the event that in $T_p$ there is no directed path from $B\cap S$ to $A\cap S$.
\end{itemize}

The following claim shows that, as long as the events $\cB_1, \cB_2$, $\cB_3$ and $\cB_4$ are avoided, we have that $T_p=T[S]$ is Hamiltonian.

\begin{claim}\label{claim:proving hamiltonicity}
Let $p\in (0, 1)$, $\eps\in (0, 10^{-2})$, and let $n$ be a sufficiently large integer. If $T$ is a tournament with an $\eps$-good partition $V(T)=A\cup B\cup X$, and if the events $\cB_1, \cB_2, \cB_3, \cB_4$ do not hold, then $T_p$ is Hamiltonian.
\end{claim}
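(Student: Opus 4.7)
The plan is to reduce Hamiltonicity of $T_p = T[S]$ to strong connectivity (since a tournament is Hamiltonian if and only if it is strongly connected), and then to verify strong connectivity deterministically in three steps, corresponding to the three parts $A\cap S$, $B\cap S$, and $X\cap S$ of the partition of $S$ inherited from the $\eps$-good partition of $V(T)$.

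For the first step, note that $\neg \cB_2$ gives $\delta^0(T[A\cap S])\geq \tfrac{3}{10}|A\cap S|$ and $\delta^0(T[B\cap S])\geq \tfrac{3}{10}|B\cap S|$. A short arithmetic check shows that $\lceil \tfrac{3}{10}k\rceil \geq \lfloor (k+2)/4\rfloor$ for every $k$ for which the semidegree bound $\tfrac{3}{10}k$ is not already self-contradictory for a $k$-vertex tournament; moreover $\neg \cB_3$ and $\neg \cB_4$ force both $A\cap S$ and $B\cap S$ to be nonempty. Hence the Hamiltonicity criterion for tournaments mentioned in the introduction implies that $T[A\cap S]$ and $T[B\cap S]$ are Hamiltonian, and in particular strongly connected.

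For the second step, $\neg \cB_3$ and $\neg \cB_4$ provide a directed path in $T[S]$ from some vertex of $A\cap S$ to some vertex of $B\cap S$, and a directed path in the reverse direction, respectively. Combined with the strong connectivity of each of $T[A\cap S]$ and $T[B\cap S]$ established in the previous step, this implies that every vertex of $(A\cup B)\cap S$ belongs to a single strongly connected component of $T[S]$.

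For the third step, observe that $\neg \cB_1$ yields $|X\cap S|< |S|/5$ while $\neg \cB_2$ yields $\delta^0(T[S])\geq |S|/5$. Thus every $v\in X\cap S$ has strictly more than $|X\cap S|$ in-neighbours and strictly more than $|X\cap S|$ out-neighbours in $S$, so $v$ must have at least one in-neighbour and one out-neighbour in $(A\cup B)\cap S$. This places $v$ in the same strong component as $(A\cup B)\cap S$, and iterating over all $v\in X\cap S$ shows that $T[S]$ is strongly connected, and therefore Hamiltonian. The argument is entirely deterministic gluing once the bad events are phrased as above; the only content is in the choice of the constants $\tfrac{3}{10}$ and $1/5$ in $\cB_2$ and $\cB_1$, which are calibrated precisely so that (i) the core minimum semidegree bound beats the tournament Hamiltonicity threshold $\lfloor (k+2)/4\rfloor$, and (ii) the strict inequality $\delta^0(T[S])>|X\cap S|$ in the attachment step is automatic. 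The genuine difficulty of Lemma~\ref{lemma:using connectors} is thus shifted entirely to bounding the probabilities of $\cB_1,\cB_2,\cB_3,\cB_4$.
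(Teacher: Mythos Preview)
Your proof is correct and follows essentially the same approach as the paper's: reduce to strong connectivity, use $\neg\cB_2$ to make $T[A\cap S]$ and $T[B\cap S]$ strongly connected via the tournament Hamiltonicity threshold, use $\neg\cB_3,\neg\cB_4$ to merge them into one strong component, and use $\neg\cB_1$ together with $\delta^0(T[S])\geq |S|/5$ to attach each vertex of $X\cap S$. The only cosmetic difference is ordering---the paper disposes of $X\cap S$ first and then handles $A\cap S$, $B\cap S$, whereas you do the reverse---and the paper writes the threshold as $\frac{3}{10}|A\cap S|>\lfloor\frac{|A\cap S|-2}{4}\rfloor$ rather than your $\geq \lfloor\frac{|A\cap S|+2}{4}\rfloor$, which are equivalent.
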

\begin{proof}
We will show that $T_p$ is Hamiltonian by showing it is strongly connected, meaning that between any two vertices $u, w\in S$ there is a directed walk from $u$ to $w$ in $T_p$. Let us consider several cases, based on which part of the partition the vertices $u, w$ belong to. 

As a first step, observe that from any $u\in X$, one can take an out-edge to a vertex in $A\cup B$, since the minimum semidegree of $T_p$ is at least $\delta^0(T[S])\geq |S|/5 > |S\cap X|$ (where we use that the events $\cB_1$ and $\cB_2$) do not hold. For the same reason, every $w\in X$ has an incoming edge from $A\cup B$. This observation shows that if suffices to verify that there is a directed path between any pair of vertices in $A\cup B$, since this is enough to guarantee the strong connectivity of the whole $T_p$ by using the incoming/outgoing edges from $X$.

Secondly, observe that $\delta^0(T[A\cap S])\geq \frac{3}{10}|A\cap S|> \lfloor \frac{|A\cap S|-2}{4}\rfloor$, implying that $T[A\cap S]$ is Hamiltonian and thus strongly connected. Thus, if $u, w\in A\cap S$, there exists the directed walk from $u$ to $w$. Similarly, $T[B\cap S]$ is also strongly connected, showing that if $u, w\in B\cap S$, then we also have a directed walk from $u$ to $w$. 

Finally, let us consider the case when $u, w$ are in different parts, say $u\in A\cap S$ and $w\in B\cap S$. Since $\cB_3$ does not hold, there is a directed path from some $a\in A\cap S$ to some $b\in B\cap S$. Furthermore, since $T[A\cap S], T[B\cap S]$ are strongly connected, there is a directed walk from $u$ to $a$ and from $b$ to $w$. Concatenating these walks and the path from $a$ to $b$, we conclude that there is a directed walk from $u$ to $w$, completing this case as well. Note that the last case, when $u\in B\cap S$ and $w\in A\cap S$, is symmetric, with the roles of $A$ and $B$ reversed. Thus, the analysis we have given so far completes the proof.
\end{proof}

We are now ready to prove Lemma~\ref{lemma:using connectors} using Claim~\ref{claim:proving hamiltonicity}.

\begin{proof}[Proof of Lemma~\ref{lemma:using connectors}.]
Let $V(T)=A\cup B\cup X$ be the $\eps$-good partition of $T$, and let us denote by $S$ the vertex set of $T_p$. By Claim~\ref{claim:proving hamiltonicity}, we know that if $T_p$ is not Hamiltonian, then one of the events $\cB_1, \cB_2, \cB_3$ or $\cB_4$ must hold. Thus, let us begin by bounding the probabilities of $\cB_1, \cB_2$ and $\cB_3$.

We start by showing $\Pb[\cB_1]\leq \sigma/4$. Note that $|S\cap X|$ is a binomial random variable with mean $p|X|\leq p\eps n$, since $|X|= n-|A|-|B|\leq n-2\frac{1-\eps}{2}n=\eps n$. Thus, by Chernoff bounds, we have that 
\[\Pb[|S\cap X|\geq 2p\eps n]\leq \exp\Big(-\frac{\eps p n}{3}\Big)\leq \sigma/8,\]
as long as $n$ is large enough. Also, the probability that $|S|<pn/2$ is at most $\exp\Big(-\frac{p n/2}{3}\Big)\leq \sigma/8$, since the expectation of $|S|$ is $pn$. We have $\eps\leq \frac{1}{20}$, and therefore $|S\cap X|\leq 2\eps pn$ and $pn/2\leq |S|$ implies $|S\cap X|\leq |S|/5$. Hence, $\Pb[\cB_1]\leq \sigma/4$.

Next, we show that $\Pb[\cB_2]\leq \sigma/4$. By Claim~\ref{claim:controlling the degrees 1} applied to $T[A]$, with probability at least $1-\sigma/12$, for all $v\in V(T_p)$ we have that 
\begin{equation}\label{eq:2}
    \left|\frac{|N^-(v)\cap A\cap S|}{|A\cap S|}- \frac{d_{T[A]}^-(v)}{|A|}\right|,\left|\frac{|N^+(v)\cap A\cap S|}{|A\cap S|}- \frac{d_{T[A]}^+(v)}{|A|}\right|\leq \frac{1}{100}.
\end{equation}
Since $V(T)=A\cup B\cup X$ is an $\eps$-good partition, we have $d_{T[A]}^-(v), d_{T[A]}^+(v)\geq (\frac{1}{6}-\eps)n\geq \frac{31|A|}{100}$, and so (\ref{eq:2}) implies that inside $A\cap S$ all vertices have degree at least $\frac{3}{10}|A\cap S|$. Thus, the probability that $\delta^0(T[A\cap S])<\frac{3}{10}|A\cap S|$ is at most $\sigma/12$. An analogous argument shows that the probability that $\delta^0(T[B\cap S])<\frac{3}{10}|B\cap S|$ is at most $\sigma/12$. Finally, by applying Claim~\ref{claim:controlling the degrees 1} to the tournament $T$ we find that with probability at least $1-\sigma/12$ we have
\begin{equation*}
    \left|\frac{|N^-(v)\cap S|}{|S|}- \frac{d_{T}^-(v)}{n}\right|,\left|\frac{|N^+(v)\cap S|}{|S|}- \frac{d_{T}^+(v)}{n}\right|\leq \frac{1}{100}.
\end{equation*}
Recalling that $d^+_T(v), d^-_T(v)\geq n/4$ for all $v\in V(T)$, we can easily deduce from the above inequality that $|N^+(v)\cap S|, |N^-(v)\cap S|\geq (\frac{1}{4}-\frac{1}{100})|S|\geq \frac{1}{5}|S|$ for all $v\in S$. By a union bound, this shows that $\Pb[\cB_2]\leq 3\cdot \sigma/12\leq \sigma/4$.

Finally, let us argue that $\Pb[\cB_3]\leq \sigma/4$. In fact, we bound the probability no edge $a\to b$ with $a\in A, b\in B$ has both $a, b\in S$, which is clearly an upper bound on $\Pb[\cB_3]$. Observe that for at least half the vertices of $A$, we have $|N^+(a)\cap B|\geq |B|/2$ (if this was not the case, would could not have more than $\frac{3}{4}|A||B|$ edges from $A$ to $B$). Now, the probability that $S$ contains no vertex $a\in A$ with $|N^+(a)\cap B|\geq |B|/2$ is at most $(1-p)^{|A|/2}$. If $S$ contains such a vertex, the probability that it contains no element of $N^+(a)\cap B$ is at most $(1-p)^{|B|/2}$. Thus, $\Pb[\cB_3]\leq (1-p)^{|A|/2}+(1-p)^{|B|/2}\leq 2(1-p)^{n/3}\leq \sigma/4$.

Let us now split the proof into two parts, and first focus on the part \textit{(a)} of the Lemma~\ref{lemma:using connectors}.
\medskip

\noindent
\textbf{Part {(a)}.}
Let $C$ be the set of $s$ $k$-connectors in $T$, which exists by assumption. We have $\Pb[C\cap S\neq \varnothing]=1-(1-p)^s$ and therefore, in order to prove that $\Pb[T_p\text{ is Hamiltonian}]\geq 1-(1-p)^s-\sigma$, it suffices to show $\Pb[C\cap S\neq \varnothing\text{ and }T_p\text{ is not Hamiltonian}]\leq \sigma$. Recall that by Claim~\ref{claim:proving hamiltonicity}, we know that if $T_p$ is not Hamiltonian, then one of the events $\cB_1, \cB_2, \cB_3$ or $\cB_4$ must hold. Hence,
\begin{align*}
\Pb\big[C\cap S\neq \varnothing\text{ and }T_p\text{ is not Hamiltonian}\big]
&\leq \Pb[C\cap S\neq \varnothing\text{ and }(\cB_1\vee \cB_2\vee \cB_3\vee \cB_4)]\\
&\leq \Pb[\cB_1]+ \Pb[\cB_2]+ \Pb[\cB_3]+ \Pb[C\cap S\neq \varnothing\text{ and }\cB_4]\\
&\leq \Pb[\cB_1]+ \Pb[\cB_2]+ \Pb[\cB_3]+ \sum_{v\in C} \Pb[v\in S\text{ and }\cB_4 ].
\end{align*}
Note that we have used a union bound over all $v\in C$ in the last inequality. We know that the first three probabilities are at most $\sigma/4$, and therefore it suffices to show that each of the probabilities in the last sum $\sigma/4s$. To do this, we will fix a vertex $v\in C$ and show that $\Pb\big[v\in S\text{ and } \cB_4\big]\leq \sigma/4s$. In fact, it will be easier to show that $\Pb[\cB_4|v\in S]\leq \sigma/4s$, which is still sufficient. 

The probability that $N^-(v)\cap B\cap S=\varnothing$ is at most $(1-p)^{k}$, since $|N^-(v)\cap B|\geq k$ due to the fact $v$ is a $k$-connector. Similarly, $\Pb[N^+(v)\cap A\cap S=\varnothing]\leq (1-p)^k$, and therefore $v$ has an in-neighbour $b\in B\cap S$ and an out-neighbour $a\in A\cap S$ with probability at least $1-2(1-p)^k\geq 1-2(\sigma/s)^2\geq 1-\sigma/4 s$ (where we have used that $k\geq 2\log_{\frac{1}{1-p}}(s\sigma^{-1})$). Since we are given that $v\in S$, we know that with probability $1-\sigma/4s$ there is a directed path from $B$ to $A$ in $T_p$. This shows that $\Pb[\cB_4|v\in S]\leq \sigma/4s$, thus completing the proof of the statement \textit{(a)}.
\medskip

\noindent
\textbf{Part {(b)}.} Let $M$ be a matching of the edges directed from $B$ to $A$ of size $k$. If there is an edge $e\in M$ with $e\subset S$, then $\cB_4$ does not hold. Thus, $\Pb[\cB_4]\leq \Pb[\text{ no $e\in M$ has $e\subset S$}]= (1-p^2)^k$, where we have used the fact that each $e\in M$ has probability $p^2$ of being fully included in $S$, and these events are independent for all edges since $M$ is a matching. Thus, 
\[\Pb[T_p\text{ is Hamiltonian}]\geq 1-\Pb[\cB_1]- \Pb[\cB_2]- \Pb[\cB_3]-\Pb[\cB_4]\geq 1-(1-p^2)^k-3/4\sigma,\]
which is sufficient to complete the proof.
\end{proof}

\section{Completing the proof}\label{sec:completing}

\begin{proof}[Proof of Theorem~\ref{thm:main}.]

Let us fix a small constant $\sigma>0$, and let us show that for sufficiently large $n$ we have $\Pb[T_p\text{ is Hamiltonian}]\geq 1-(1-p)^t-\sigma$ (or $\Pb[T_p\text{ is Hamiltonian}]\geq 1-(1-p)^{t+1}-\sigma$ if $n-t\equiv 1\bmod 4$). Let us also fix parameters $k=2\log_{\frac{1}{1-p^2}}((t+1)\sigma^{-1})>2\log_{\frac{1}{1-p}}((t+1)\sigma^{-1})$ and $\eps=\frac{1}{100k}$. Note that the stated inequality for $k$ holds since $\frac{1}{1-p^2}<\frac{1}{1-p}$ and we need $k>2\log_{\frac{1}{1-p}}((t+1)\sigma^{-1})$ in order to apply the part \textit{(a)} of Lemma~\ref{lemma:using connectors}.

By Lemma~\ref{lemma:bidense case}, we may assume that there exists a balanced bipartition $V(T)=A\cup B$ for which $e(A, B)\geq (1-\eps^3) |A||B|$, since we otherwise have $\Pb[T_p\text{ is Hamiltonian}]\geq 1-\sigma$. Thus, by applying Lemma~\ref{lemma:good partitions exist}, we find that there exists an $\eps$-good partition $V(T)=A_0\cup B_0\cup X_0$. Let us pick an $\eps$-good partition $V(T)=A_0\cup B_0\cup X_0$ which minimizes $|X_0|$. 

Finally, by part \textit{(a)} of Lemma~\ref{lemma:using connectors}, we are immediately done if there are at least $t+1$ vertices which are $k$-connectors. Thus, we may assume there are fewer than $t+1$ vertices $b\in B_0$ which have $|N^+(b)\cap A_0|\geq k+t$, and similarly fewer than $t+1$ vertices $a\in A_0$ which have $|N^-(a)\cap B_0|\geq k+t$ (because all such vertices are $k$-connectors). Let us now move all such vertices $A_0, B_0$ to $X_0$, noting that we move at most $t$ vertices. Let us denote the resulting partition by $V(T)=A\cup B\cup X$. 

Each vertex $b\in B_0$ which was moved to $X$ still satisfies $|N^+(v)\cap A|\geq (k+t)-t=k$, and hence remains a $k$-connector (since it is also adjacent to at least $(1/6-\eps)n-t$ vertices of $B$). Similarly, each vertex $a\in A_0$ which was moved to $X$ remains a $k$-connector. Also, the partition $V(T)=A\cup B\cup X$ still satisfies that $e(A, B)\geq (1-2\eps)|A||B|$,  $|A|, |B|\geq (1-2\eps) n/2$, and also that $\delta^0(T[A]), \delta^0(T[B])\geq (1/6-2\eps)n$, since at most $t$ vertices were moved out of $A, B$. Hence, $V(T)=A\cup B\cup X$ is a $2\eps$-good partition.

Let $C_A=\{v\in X: |N^+(v)\cap A|\geq k\}$ and $C_B=\{v\in X: |N^-(v)\cap B|\geq k\}$. Then, all vertices of $C_A\cap C_B$ are precisely the $k$-connectors in $X$. Thus, if $|C_A\cap C_B|\geq t$ (or $|C_A\cap C_B|\geq t+1$ if $n-t\equiv 1\bmod 4$), then we are done by part \textit{(a)} of Lemma~\ref{lemma:using connectors}. 

Thus, in what follows we will assume that the largest matching of edges directed from $B$ to $A$ has size at most $k$, since otherwise by part \textit{(b)} of Lemma~\ref{lemma:using connectors} we have \[\Pb[T_p\text{ is Hamiltonian}]\geq 1-(1-p^2)^k-\sigma/2\geq 1-\left(\frac{\sigma}{t+1}\right)^2-\sigma/2\geq 1-\sigma,\]
where we have used the definition of $k=2\log_{\frac{1}{1-p^2}}((t+1)\sigma^{-1})$.

\medskip

Let us count the incoming edges to the set $A$. We have $\sum_{a\in A}d^{-} (a)\geq \delta^0(T) |A|$. Since every edge contained in $A$ is counted exactly once in this sum, we have
\[\binom{|A|}{2}+e(B, A)+e(X, A)= \sum_{a\in A}d^{-}(a)\geq \delta^0(T) |A|.\]

The edge counts $e(B, A)$ and $e(X, A)$ can be bounded as follows. Observe that no vertex $b\in B$ has more than $k+t$ outgoing edges to $A$, since otherwise it would have been moved to $X$. Similarly, no vertex of $A$ has more than $k+t$ incoming edges from $B$. Moreover, the edges directed from $B$ to $A$ do not contain a matching of size $k$, and therefore by K\"onig's theorem there exits a set of $k$ vertices which hits all edges directed from $B$ to $A$. Thus, there are at most $k(k+t)$ edges directed from $B$ to $A$, $e(B, A)\leq k(k+t)$.

Let us now show that no vertex $v\in C_A\backslash C_B$ has at least $n/6$ outgoing edges to $A$. Suppose such a vertex $v$ existed - since $v\notin C_B$, then $v$ was not moved from $A_0\cup B_0$ to $X_0$, and so we have $v\in X_0$. Since the partition $V(T)=X_0\cup A_0\cup B_0$ was chosen so that $|X_0|$ is minimal, the vertex $v$ could not be added to $A_0$ and keep the partition $\eps$-good, which means that $|N^-(v)\cap A_0|< n/6$. Since $|X|\leq 4\eps n$, we have \[|N^-(v)\cap B|\geq d^-(v)-|N^{-}(v)\cap A|-|X|\geq \left\lfloor \frac{n+2}{4}\right\rfloor-\frac{n}{6}-4\eps n\geq \frac{n}{100}.\]
But then the vertex $v$ is a $k$-connector, since it has at least $k$ in-neighbours in $B$ and $k$ out-neighbours in $A$, contradicting the assumption that $v\notin C_B$. Finally, note that vertices outside $C_A$ have fewer than $k$ outneighbours in $A$, by definition, and so $e(X, A)\leq |A|\cdot |C_A\cap C_B|+n/6\cdot |C_A\backslash C_B| + k|X\backslash C_A|\leq |A|\cdot |C_A\cap C_B|+n/6\cdot|C_A\backslash C_B|+4k\eps n$. 

Putting it all together, we find that
\[\binom{|A|}{2}+k(k+t)+|A|\cdot |C_A\cap C_B|+\frac{n}{6}|C_A\backslash C_B|+4k\eps n\geq \delta^0(T) |A|.\]
For simplicity, let us denote $s=|C_A\cap C_B|$, and rearrange the above inequality
\begin{align*}
k(k+t)+\frac{n}{6}|C_A\backslash C_B|+4k\eps n&\geq |A|\left[\delta^0(T)-s-\frac{|A|-1}{2}\right]\\
&\geq \frac{(1-2\eps)n}{2}\left[\delta^0(T)-s-\frac{|A|-1}{2}\right].
\end{align*}
Also, observe that we can write $k(k+t)+4k\eps n\leq 5k\eps n$ if $n$ is large enough. Thus, if we divide the inequality by $n/6$, we arrive at 
\[30k\eps+|C_A\backslash C_B|\geq \frac{6(1-2\eps)}{2}\left[\delta^0(T)-s-\frac{|A|-1}{2}\right]\geq 2\left[\delta^0(T)-s-\frac{|A|-1}{2}\right].
\]
Hence,
\[30k\eps+|C_A\backslash C_B|\geq 2\left[\delta^0(T)-s-\frac{|A|-1}{2}\right].\]
Since $30k\eps\leq \frac{1}{3}$ and all other values in the inequality are integers, we conclude that the inequality still holds even when the term $30\eps k$ is removed. If we count the outgoing edges from $B$, we can perform a completely symmetric argument to arrive at the inequality
\[|C_B\backslash C_A|\geq 2\left[\delta^0(T)-s-\frac{|B|-1}{2}\right].\]
Let us write $|X|=x$ and observe that $|C_A\backslash C_B|+|C_B\backslash C_A|=|C_A\cup C_B|-|C_A\cap C_B|\leq x-s$, since $C_A\cup C_B\subseteq X$. Thus, adding the above two inequalities and using $|A|+|B|=n-x$, we find
\[x-s\geq |C_A\backslash C_B|+|C_B\backslash C_A|\geq 2\left[2\delta^0(T)-2s-\frac{|A|+|B|-2}{2}\right]\geq 2\left[2\delta^0(T)-2s-\frac{n}{2}+\frac{x+2}{2}\right].\]
Rearranging, we get $3s\geq 4\delta^0(T) - n+2$. If $n-t\equiv 1\bmod 4$, we then have $\delta^0(T)\geq \frac{n-1-t}{4}+t$, and so $3s\geq 1+3t$, implying that $s\geq t+1$. Otherwise, we get $\delta^0(T)\geq \frac{n-4-t}{4}+t$, and so $3s\geq 3t-2$, implying that $s\geq t$. By applying part \textit{(a)} of Lemma~\ref{lemma:using connectors}, this concludes the proof.
\end{proof}

\section{Concluding remarks}

As we mentioned in the introduction, the original question of Erd\H{o}s and Faudree can be interpreted as a question of robustness of Hamiltonicity in Dirac graphs, under sampling random vertex subsets. However, there is no reason to limit the study of this type of question to Hamiltonicity only. It would also be interesting to study this notion of robustness in relation to the classical theorems which guarantee the existence of spanning structures under a minimum degree assumption. To be more precise, if a graph $G$ with minimum degree $\delta(G)\geq c n$ must always contain a spanning structure $H$, and if $S\subseteq V(G)$ is a random vertex subset, how likely is $S$ to contain the same spanning structure? 

In another direction, we saw that Theorem~\ref{thm:main} applies for any $p\in (0, 1)$ (and also if $p$ decreases sufficiently slowly as a function of $n$). However, the question of Erd\H{o}s and Faudree has only been addressed for $p=1/2$ in the graph case, and it seems that the arguments of \cite{DKM} do not readily generalize to all $p$. Therefore, if $G$ is an $(n+1)$-regular $2n$-vertex graph and $S\subseteq V(G)$ is sampled by including each vertex independently with probability $p$, it remains an interesting problem to determine the probability that $G[S]$ is Hamiltonian as $p$ varies.

\end{document}